\newcommand{\sgn}{\text{sgn}}
\newtheorem{corollary}{Corollary}[section]
\newtheorem{lemma}[corollary]{Lemma}
\newtheorem{proposition}[corollary]{Proposition}
\newtheorem{remark}[corollary]{Remark}
\newtheorem{theorem}[corollary]{Theorem}
\newfont{\sBlackboard}{msbm10 scaled 900}
\newcommand{\mylabel}[1]{\label{#1}
            \ifx\undefined\stillediting
            \else \fbox{$#1$}\fi }
\newcommand{\BE}{\begin{equation}}
\newcommand{\EEQ}{\end{equation}}
\newcommand{\rfb}[1]{\mbox{\rm
   (\ref{#1})}\ifx\undefined\stillediting\else:\fbox{$#1$}\fi}
\newfont{\Blackboard}{msbm10 scaled 1200}
\newfont{\roma}{cmr10 scaled 1200}
\def\CC{\rm \hbox{C\kern-.56em\raise.4ex
         \hbox{$\scriptscriptstyle |$}\kern+0.5 em }}
\def\Frac{\displaystyle\frac}
\def\Int{\displaystyle\int}
\def\n{|\kern -.05cm{|}\kern -.05cm{|}}
\def\R{{\bf \hbox{\sc I\hskip -2pt R}}} %rels
\newcommand{\mm}    {{\hbox{\hskip 0.5pt}}}
\newcommand{\bluff} {{\hbox{\raise 15pt \hbox{\mm}}}}
\def\section{\@startsection {section}{1}{\z@}{-3.5ex plus -1ex minus
    -.2ex}{2.3ex plus .2ex}{\large\bf}}
\def\be{\begin{equation}}
\def\ee{\end{equation}}
\date{ }
\begin{document}
\thispagestyle{empty}
\title{\bf Global behavior of the Solutions to
a Class of Nonlinear, Singular  Second Order ODE.}\maketitle

\author{ \center  Mama ABDELLI\\
Université Djillali Liabés, Laboratoire de Mathématique, \\B.P.89. Sidi Bel Abbés 22000,
Algeria.\\
abdelli-mama@yahoo.fr\\}
\medskip\author{ \center  Alain HARAUX (1, 2)\\ 1.  UPMC Univ Paris 06, UMR 7598, Laboratoire Jacques-Louis Lions, \\ F-75005,
Paris, France.\\{2- CNRS, UMR 7598, Laboratoire Jacques-Louis Lions, \\Bo\^{\i}te courrier 187,  75252 Paris Cedex 05,  France.\\
 haraux@ann.jussieu.fr\\}}

\vskip20pt

 \renewcommand{\abstractname} {\bf Abstract}
\begin{abstract} In this paper the initial value problem and global properties of solutions are studied for  the scalar second order ODE: $ (|u'|^{l}u')' +
c|u'|^{\alpha}u' + d|u|^\beta u=0$, where $\alpha,\beta,l,c, d$ are positive constants. In particular, existence, uniqueness and regularity as well as
optimal  decay rates of solutions to 0 are obtained depending on the various parameters, and the oscillatory or non-oscillatory behavior is elucidated .
\end{abstract}
\bigskip\noindent

 {\small \bf AMS classification numbers:} 34A34, 34C10, 34D05, 34E99

\bigskip\noindent {\small \bf Keywords:}  Second order scalar ODE, existence of solution,
oscillatory solutions, Decay rate.\newpage
\section {Introduction}

 In this paper we consider the scalar second order ODE
 \begin{equation}\label{1}
 \Big(|u'|^{l}u'\Big)' + c|u'|^{\alpha}u' + d|u|^\beta u=0,
 \end{equation}
 where $\alpha,\beta, c, d$ are positive constants and $l\geq
 0$.\\ In the special case $l=0$ and $d=1$ we find the simpler equation
\begin{equation}\label{13}
u'' + c|u'|^{\alpha}u' + |u|^\beta u=0.
 \end{equation}
 The solutions of \eqref{13} are global for $t\ge 0$ and both $u$ and $u'$ decay to $0$ as  $t \rightarrow \infty$. This equation was studied in {\bf\cite{har2}} by the second author who used some modified energy function to estimate the rate of decay. In addition, he showed that if $\alpha > \frac{\beta}{\beta+2}$ all non-trivial solutions are oscillatory and
if $\alpha <\frac{\beta}{\beta+2}$ they are non-oscillatory.\\

 The consideration of the more complicated problem \eqref{1} is partially motivated by {\bf\cite{BM}} and {\bf\cite{AB}}  in which a similar but harder (infinite dimensional) problem with nonlinear dissipation $\sigma(t)g(u')$ was studied with application to some PDE in a bounded domain. Under  Neumann or Dirichlet boundary  conditions, and for nonlinearities asymptotically homogeneous near $0$ similar to the ones appearing in \eqref{1}, they proved a sharp decay property of the energy without establishing  a well-posedness result. This followed a  previous work {\bf\cite{AM}} by  Benaissa and Amroun who constructed exact solutions of a quasilinear wave equation of Kirchhoff type with nonlinear source term without dissipative term for some  initial data and showed  finite time blowing up results for some other initial data.\\
 
In this paper we use some techniques from {\bf\cite{har2}} to  estimate the energy decay of the solutions to (\ref{1}) and to  show that all non-trivial solutions of
  (\ref{1}) are oscillatory for $\alpha>
  \frac{\beta(l+1)+l}{\beta+2}$ and $\alpha>l$, non-oscillatory for  $\alpha<
  \frac{\beta(l+1)+l}{\beta+2}$ and $\beta>l$. One major difference with {\bf\cite{har2}} is that here we have to establish well-posedness in a regularity class compatible with the presence of the singular leading term $ \Big(|u'|^{l}u'\Big)' $ in the equation. This singularity prevents $u$ to have a second derivative at all points where $u'$ vanishes for any non-trivial solution. We show that such points are isolated, which allow us to generalize the methods of {\bf\cite{har2}}, sometimes by using the density of regular points after proving identities or inequalities outside the singularities. \\

The paper is organized as follows. In section 2 we prove the existence and  uniqueness of a global  solution $u\in {\mathcal C}^1(\R^+)$ with $|u'|^lu' \in {\mathcal C}^1(\R^+)$ for any initial data in $\R^2$ as well as the isolated character of singular points. In section 3 we prove the energy estimates, then oscillatory and non-oscillatory behavior are delimited at section 4 according to the relations between the various parameters in \eqref{1}. In section 5 we study very precisely the non-oscillatory case in which 2 different decay rates can occur. Finally, section 6 is devoted  to some optimality results. For related results concerning convergence to equilibrium and multiple rates of convergence for second order evolution equations, cf. also {\bf\cite{har1, har3, har4, har5}}.
  \section{The initial value problem for Equation (\ref{1})}
  In this section, we consider the existence and uniqueness of solutions to the initial value problem associated to \eqref{1}. We start by an existence result which does not require any additional restrictions on the parameters $\alpha,\beta, c, d, l$.
\begin{proposition}\label{Pr1}
Let $(u_0,u_1)\in \R^2$.
The problem (\ref{1}) has a  global solution satisfying
$$u\in {\mathcal C}^1(\R^+),\,\,\,\,\,\,|u'|^lu'\in {\mathcal C}^1(\R^+)\,\,\,\,\
\mbox{and}\,\,\,\, u(0)= u_0,\,\,\ u'(0)= u_1.$$
\end{proposition}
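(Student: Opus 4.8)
The plan is to recast \eqref{1} as a first-order system in which the degenerate leading term is absorbed into a new unknown, apply the Cauchy--Peano theorem for local existence, and then close the argument globally by an energy estimate. First I would set $v := |u'|^l u'$. The map $s \mapsto |s|^l s$ is a continuous increasing bijection of $\R$ onto itself, with continuous inverse $\phi(v) = \sgn(v)\,|v|^{1/(l+1)}$, so that $u' = \phi(v)$. Substituting into \eqref{1} and using $|u'|^\alpha u' = \sgn(v)\,|v|^{(\alpha+1)/(l+1)}$, the equation becomes equivalent to the autonomous system
\begin{equation}
u' = \sgn(v)\,|v|^{1/(l+1)}, \qquad v' = -c\,\sgn(v)\,|v|^{(\alpha+1)/(l+1)} - d\,|u|^\beta u,
\end{equation}
with initial data $u(0)=u_0$, $v(0)=|u_1|^l u_1$. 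A pair $(u,v)\in C^1$ solving this system yields exactly the claimed regularity: $u\in C^1(\R^+)$ and $|u'|^l u' = v \in C^1(\R^+)$.

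The right-hand side $F(u,v)$ of this system is continuous on all of $\R^2$; the only candidate for loss of smoothness is the set $\{v=0\}$, where each component remains continuous since the exponents $1/(l+1)$ and $(\alpha+1)/(l+1)$ are positive. Hence the Cauchy--Peano theorem furnishes a local $C^1$ solution on an interval $[0,T)$, which extends to a maximal interval $[0,T_{\max})$. Because $F$ is merely continuous and not locally Lipschitz when $l>0$ (the exponent $1/(l+1)$ is then $<1$), Picard iteration is unavailable; this is the price of the singular leading term, but for existence alone Peano suffices. By the standard continuation principle, either $T_{\max}=+\infty$ or $|(u,v)(t)|\to\infty$ as $t\to T_{\max}^-$.

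To rule out the second alternative I would use the energy
\begin{equation}
E(t) := \frac{l+1}{l+2}\,|u'(t)|^{l+2} + \frac{d}{\beta+2}\,|u(t)|^{\beta+2} = \frac{l+1}{l+2}\,|v(t)|^{\frac{l+2}{l+1}} + \frac{d}{\beta+2}\,|u(t)|^{\beta+2}.
\end{equation}
Written in the variables $(u,v)$, both terms are $C^1$ functions of $t$, since $u,v\in C^1$ and the exponents $\frac{l+2}{l+1}$ and $\beta+2$ both exceed $1$; this deliberately sidesteps the fact that $u$ need not be twice differentiable where $u'$ vanishes. Differentiating and inserting the second equation of the system, the conservative contributions cancel and one obtains
\begin{equation}
E'(t) = u'\,v' + d\,|u|^\beta u\,u' = -c\,|u'(t)|^{\alpha+2} \le 0.
\end{equation}
Thus $E$ is non-increasing, so $E(t)\le E(0)$ on $[0,T_{\max})$, which bounds $|u(t)|$ and $|u'(t)|$, hence also $|v(t)|=|u'(t)|^{l+1}$, uniformly in $t$. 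The trajectory therefore stays in a fixed compact set, forcing $T_{\max}=+\infty$ and producing the global solution.

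I expect the main obstacle to be not any single estimate but the bookkeeping around the singularity: one must avoid differentiating $|u'|^{l+2}$ directly, which would invoke the nonexistent $u''$ at the zeros of $u'$, and instead carry the energy in the regular variable $v$, where every manipulation is legitimate. Once the reformulation is in place, local existence is immediate from Peano and global existence follows from the sign of $E'$.
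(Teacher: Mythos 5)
Your proof is correct, but it takes a genuinely different route from the paper. The paper proceeds by regularization: it replaces the degenerate coefficient $(l+1)|u'|^{l}$ by $\varepsilon+(l+1)|u_\varepsilon'|^{l}$, solves the resulting non-degenerate ODE classically, derives $\varepsilon$-uniform a priori bounds (the energy identity plus a uniform Lipschitz bound on $w_\varepsilon=|u_\varepsilon'|^{l}u_\varepsilon'$), and then passes to the limit via Ascoli's theorem in ${\mathcal C}^1$, recovering the equation in integrated form. You instead absorb the degeneracy into the unknown $v=|u'|^{l}u'$, obtaining a first-order system with continuous (though non-Lipschitz) right-hand side, so Cauchy--Peano gives local existence directly; your identity $E'=-c|u'|^{\alpha+2}$ is exactly right, and it is legitimate because in the $(u,v)$ variables the energy involves only the powers $(l+2)/(l+1)$ and $\beta+2$, both $>1$, so $E$ is ${\mathcal C}^1$ along the trajectory without ever invoking $u''$. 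The equivalence of the system with the stated regularity class ($u\in{\mathcal C}^1$, $|u'|^{l}u'=v\in{\mathcal C}^1$) is also correctly justified. As for what each approach buys: yours is shorter and more elementary, dispensing with compactness and the limit passage, and it isolates cleanly why $(u,|u'|^{l}u')$ is the right phase variable --- a structure the paper itself exploits only later (polar coordinates in Section 4, the invariant-region argument in Section 5). The paper's vanishing-viscosity scheme, by contrast, is the argument that survives in the infinite-dimensional setting that motivated the problem (Kirchhoff/$p$-Laplacian type equations), where Peano's theorem is unavailable, and it exhibits the solution as a limit of smooth approximations. One cosmetic point: with a merely continuous right-hand side the continuation principle gives $\limsup_{t\to T_{\max}^-}|(u,v)(t)|=\infty$ rather than a genuine limit, but the lim sup version is all your energy bound needs.
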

\begin{proof}
To show the existence of the solution for (\ref{1}), we consider
{\small\begin{equation}\label{H1}\left\{
\begin{array}{ll}
(\varepsilon +(l+1)|u_\varepsilon'|^l)u_\varepsilon''
+c|u'_\varepsilon|^\alpha u'_\varepsilon +d|u_\varepsilon|^\beta u_\varepsilon=0\\
u_\varepsilon(0)=u_0,\,\,\,\, u'_\varepsilon(0)=u_1.
\end{array} \right.\end{equation}} Here, $\varepsilon > 0$ is a small parameter, devoted to tend to zero.\\

\begin{itemize}
 \item[i)] A priori estimates:\\\\
From (\ref{H1}), we get
{\small\begin{equation*}\label{H2}\left\{
\begin{array}{ll}
u_\varepsilon''
+\Frac{c|u'_\varepsilon|^\alpha u'_\varepsilon +d|u_\varepsilon|^\beta u_\varepsilon}{\varepsilon +(l+1)|u_\varepsilon'|^l}=0\\
u_\varepsilon(0) =u_0,\,\,\,\, u_\varepsilon(0) =u_1.
\end{array} \right.\end{equation*}}
This is the initial value problem for an
 ordinary differential equation, which admits a unique  local solution $u_\varepsilon \in {\mathcal C}^2([0,T_{\max}))$. Given $t \in [0,T_{\max})$ we have the following energy identity:
{\small\begin{equation*}
 \begin{split}
 \frac{d}{dt}\Big[\frac{\varepsilon}{2}|u'_\varepsilon(t)|^2
 +\frac{l+1}{l+2}|u_\varepsilon'(t)|^{l+2}+\frac{d}{\beta+2}|u_\varepsilon(t)|^{\beta+2}\Big] +
 c|u'_\varepsilon(t)|^{\alpha+2} = 0.
 \end{split}
\end{equation*}}
By integrating  over $(0,t)$, we get
{\small\begin{equation*}
 \begin{split}
 &\frac{\varepsilon}{2}|u'_\varepsilon(t)|^2
 +\frac{l+1}{l+2}|u_\varepsilon'(t)|^{l+2}+\frac{d}{\beta+2}|u_\varepsilon(t)|^{\beta+2} +
 c\Int_0^t|u'_\varepsilon(s)|^{\alpha+2}\,ds \\&= \frac{\varepsilon}{2}|u_1|^2
 +\frac{l+1}{l+2}|u_1|^{l+2}+\frac{d}{\beta+2}|u_0|^{\beta+2}.
 \end{split}
\end{equation*}}
Hence, for some constants $M_1, M_2$ independent of $\varepsilon$ we have 
\begin{equation}\label{H4}
\forall t \in [0,T_{\max}), \quad |u_\varepsilon(t)|\leq M_1,\,\,\,|u'_\varepsilon(t)|\leq M_2.
\end{equation}
In particular $T_{\max}=+\infty$,\,\, $u_\varepsilon$ is global, $u_\varepsilon \in
{\mathcal C}^2(\R^+)$ and $u_\varepsilon,\,\,\,u'_\varepsilon$ are uniformly bounded. Now we have
{\small\begin{equation*}
 \begin{split}
\Big|\Big(|u_\varepsilon'(t)|^lu'_\varepsilon(t)\Big)'\Big| &=(l+1)|u'(t)|^l|u''_\varepsilon(t)|
\\& \leq \Big|(\varepsilon + (l+1)|u'_\varepsilon(t)|^l)u''_\varepsilon(t)\Big|,
 \end{split}
\end{equation*}}
and by using (\ref{H1}) and (\ref{H4}), we deduce
\begin{equation}\label{H5}
\Big|\Big(|u_\varepsilon'(t)|^lu'_\varepsilon(t)\Big)'\Big| \leq M_4.
\end{equation}
Therefore the function $w_\varepsilon(t):= |u'_\varepsilon(t)|^lu'_\varepsilon(t)$ is uniformly Lipschitz continuous on $\R^+$ independently of $\varepsilon$.
Then the family of functions  $u'_\varepsilon(t)=|w_\varepsilon(t)|^{\frac{1}{l+1}} \sgn w_\varepsilon(t)$ is uniformly equicontinous (actually Holder continuous ) on $(0,T)$.\\
 \item[ii) ]
 Passage to the limit:\\\\
As a consequence of Ascoli's theorem and a priori estimate
(\ref{H4}), we may extract a subsequence which is still denoted for simplicity by
$(u_\varepsilon)$ such that for every $T> 0$
$$
u_\varepsilon \rightarrow u\,\,\,\,\,\mbox{in}\,\,\, {\mathcal C}^1(0,T)
$$
as $\varepsilon$ tends to $0$. Integrating (\ref{H1}) over $(0,t)$, we get
{\small\begin{equation}\label{li}
 \begin{split}
 |u'_\varepsilon(t)|^lu'_\varepsilon(t)-|u'_\varepsilon(0)|^lu'_\varepsilon(0) &=
 -c\Int_0^t|u'_\varepsilon(s)|^\alpha u'_\varepsilon(s)\,ds-d\Int_0^t|u_\varepsilon(s)|^\beta u_\varepsilon(s)\,ds
 -\varepsilon \Int_0^tu''_\varepsilon(s)\,ds\\&=
 -c\Int_0^t|u'_\varepsilon(s)|^\alpha u'_\varepsilon(s)\,ds-d\Int_0^t|u_\varepsilon(s)|^\beta u_\varepsilon(s)\,ds
 -\varepsilon (u'_\varepsilon(t)- u_1).
 \end{split}
\end{equation}}
From (\ref{li}), we then have, as $\varepsilon$ tends to $0$
$$
|u'_\varepsilon|^lu'_\varepsilon \rightarrow -c\Int_0^t|u'(s)|^\alpha u'(s)\,ds-d\Int_0^t|u(s)|^\beta u(s)\,ds
+|u'(0)|^lu'(0)
\,\,\,\,\,\mbox{in}\,\,\, {\mathcal C}^0(0,T).
$$
Hence \begin{equation}\label{ABS}
|u'|^lu'= -c\Int_0^t|u'(s)|^\alpha u'(s)\,ds-d\Int_0^t|u(s)|^\beta u(s)\,ds+|u'(0)|^lu'(0),
\end{equation}
and $|u'|^lu' \in {\mathcal C}^1(0,T)$. finally by differentiating (\ref{ABS}) we conclude that $u$ is a solution of (\ref{1}).
 \end{itemize}
  This completes the proof of Proposition \ref{Pr1}.\end{proof}
\begin{remark}
 The uniqueness of solutions of (\ref{1}) with $(u_0,u_1)$  given will be proved under some restrictions on the parameters $\alpha,\beta, l$. The next proposition concerns the uniqueness result for $(u_0,u_1) =(0,0)$.
 \end{remark}
\begin{proposition}\label{Pro}
Assume that  $l\leq \inf (\alpha,\beta)$. Then for any interval $J$ and any  $\tau\in J$ , if a solution $u$ of (\ref{1}) satisfies
$$u\in {\mathcal C}^1(J),\,\,\,\,\,\,\,|u'|^lu'\in {\mathcal C}^1(J)\,\,\,\,\, \mbox{and}\,\,\,\
u(\tau)=u'(\tau)=0,$$
then  $u\equiv 0$.
\end{proposition}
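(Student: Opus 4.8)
The plan is to control the whole argument through the natural energy
\[
E(t) = \frac{l+1}{l+2}|u'(t)|^{l+2} + \frac{d}{\beta+2}|u(t)|^{\beta+2},
\]
which is non-negative and vanishes exactly when $u(t)=u'(t)=0$. First I would check that $E\in\mathcal C^1(J)$, despite the singular points where $u'$ vanishes: the second term is $\mathcal C^1$ since $u\in\mathcal C^1(J)$ and $\beta+2>1$, while the first term equals $\frac{l+1}{l+2}|w|^{(l+2)/(l+1)}$ with $w:=|u'|^lu'\in\mathcal C^1(J)$ and exponent $(l+2)/(l+1)>1$, hence also $\mathcal C^1$. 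Differentiating and inserting $w'=-c|u'|^\alpha u'-d|u|^\beta u$ gives, at every point of $J$, $E'(t)=u'(t)\,w'(t)+d|u|^\beta u\,u'=-c|u'(t)|^{\alpha+2}\le0$, so $E$ is non-increasing on all of $J$ with no trouble at the singularities. Since $E(\tau)=0$ and $E\ge0$, for $t\ge\tau$ we get at once $0\le E(t)\le E(\tau)=0$, i.e. $u\equiv0$ on $J\cap[\tau,+\infty)$.

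The delicate direction is $t<\tau$, where monotonicity points the wrong way. Here I would turn the dissipation into a closed differential inequality. Integrating $E'=-c|u'|^{\alpha+2}$ over $[t_1,\tau]$ gives $E(t_1)=c\int_{t_1}^\tau|u'(s)|^{\alpha+2}\,ds$. The hypothesis enters through the estimate $|u'|^{\alpha+2}=(|u'|^{l+2})^{(\alpha+2)/(l+2)}\le K\,E^{(\alpha+2)/(l+2)}$, valid because $\frac{l+1}{l+2}|u'|^{l+2}\le E$; the exponent $p:=(\alpha+2)/(l+2)$ satisfies $p\ge1$ \emph{precisely} because $\alpha\ge l$. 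Combining this with the monotonicity bound $E(s)\le E(t_1)$ for $s\in[t_1,\tau]$ yields the Osgood-type inequality $E(t_1)\le K'(\tau-t_1)\,E(t_1)^{p}$ with $p\ge1$ (only the inequality $\alpha\ge l$, not the full $l\le\inf(\alpha,\beta)$, is actually used at this step).

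From this I would conclude $E(t_1)=0$ whenever $\tau-t_1$ is small: if $E(t_1)>0$ one divides to get $E(t_1)^{p-1}\ge 1/\big(K'(\tau-t_1)\big)$, whose right-hand side blows up as $t_1\to\tau^-$ while the left-hand side tends to $0$ (case $p>1$) or the factor $K'(\tau-t_1)$ drops below $1$ (case $p=1$); either way a contradiction for $t_1$ close enough to $\tau$. Hence $E\equiv0$, i.e. $u\equiv0$, on some interval $[\tau-\delta,\tau]$.

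Finally I would globalize by connectedness. The set $Z=\{t\in J:\ u(t)=u'(t)=0\}=\{E=0\}$ is closed in $J$ and contains $\tau$; applying the forward step (monotonicity) and the backward step (the Osgood inequality) at an arbitrary point of $Z$ in place of $\tau$ shows that each point of $Z$ has a two-sided neighborhood inside $Z$, so $Z$ is open in $J$ as well. As $J$ is an interval, $Z=J$ and $u\equiv0$. I expect the backward step to be the main obstacle: the reconstruction $u'=|w|^{1/(l+1)}\sgn w$ is only H\"older, not Lipschitz, in $w$, so Picard-type uniqueness is unavailable and in fact fails for $\alpha<l$; it is the energy identity combined with the exponent condition $\alpha\ge l$ that restores uniqueness.
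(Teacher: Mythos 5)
Your proof is correct, and it takes a genuinely different route from the paper's. The paper integrates equation \eqref{1} once, bounds $|u|$ by $\int_0^t|u'|\,ds$ and H\"older's inequality, and uses \emph{both} hypotheses $\alpha\ge l$ and $\beta\ge l$ (plus boundedness of $u'$) to arrive at a linear Gronwall inequality $w(t)\le C(T)\int_0^t w(s)\,ds$ for $w(t)=\big||u'(t)|^lu'(t)\big|$, whence $w\equiv 0$; one symmetric estimate covers both time directions. You instead argue on the energy: the observation that $|u'|^{l+2}=|w|^{(l+2)/(l+1)}$ with exponent $(l+2)/(l+1)>1$, so that $E\in\mathcal C^1(J)$ and $E'=-c|u'|^{\alpha+2}$ holds at \emph{every} point of $J$ including the singular ones, makes the forward direction immediate by monotonicity, while your Osgood-type inequality $E(t_1)\le K'(\tau-t_1)E(t_1)^p$ with $p=(\alpha+2)/(l+2)\ge 1$ settles the backward direction, and connectedness finishes. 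Your route buys two things: it uses only $\alpha\ge l$, so you actually prove the proposition under a weaker hypothesis than stated (the paper's Gronwall argument genuinely needs $\beta\ge l$ to control the term $d|u|^\beta u$); and your derivation of the energy identity is self-contained, whereas the paper only establishes $E'=-c|u'|^{\alpha+2}$ in Section 3 by an argument invoking Corollary \ref{f^2}, which itself rests on Proposition \ref{Pro} --- so your derivation could even be cited there without circularity. What the paper's route buys is uniformity: a single estimate with no case distinction between the two time directions and no continuation step. One caveat: your closing assertion that uniqueness ``in fact fails for $\alpha<l$'' is plausible (that is the finite-time-extinction regime $p<1$) but is proved neither by you nor by the paper; since it plays no role in the argument, it should be stated as an expectation or dropped.
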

\begin{proof} Since (\ref{1}) is autonomous, by replacing $u(t)$ by $u(t+\tau)$ and J by $J-\tau$ we are reduced to the case $\tau = 0$.  Then starting with the case $\min J= 0$, from (\ref{1}), we have
$$
(|u'|^lu')'=-c|u'|^\alpha u' -d|u|^\beta u.
$$
Integrating over $(0,t) \in J$ and using  $u'(0)=0$, we have
$$
|u'(t)|^lu'(t)=-c\Int_0^t|u'(s)|^\alpha u'(s)\,ds -d\Int_0^t|u(s)|^\beta u(s)\,ds.
$$
Since $u(0)=0$, we deduce
\begin{equation*}
\begin{split}
\Big||u'(t)|^lu'(t)\Big|&\leq c\Big|\Int_0^t|u'(s)|^\alpha u'(s)\,ds\Big|+
d\Big|\Int_0^t|u(s)|^\beta u(s)\,ds\Big| \\&\leq
c\Big|\Int_0^t|u'(s)|^\alpha u'(s)\,ds\Big|+
d\Int_0^t\Big(\Int_0^s|u'(\tau)|\,d\tau\Big)^{\beta+1}
\end{split}
\end{equation*}
and this implies
\begin{equation}\label{VM1}
\Big||u'(t)|^lu'(t)\leq
c\Big|\Int_0^t|u'(s)|^\alpha u'(s)\,ds\Big|+t d
\Big(\Int_0^t|u'(s)|\,ds\Big)^{\beta+1}.
\end{equation}
Applying H\"older inequality, we have
\begin{equation}\label{VM}
\Int_0^t|u'(s)|\,ds\leq \Big(\Int_0^t|u'(s)|^{l+1}\,ds\Big)^{\frac{1}{l+1}}t^{\frac{l}{l+1}}.
\end{equation}
Hence, by (\ref{VM1}) and (\ref{VM}), we have
$$
\Big||u'(t)|^lu'(t)\Big|\leq c\Int_0^t\Big||u'(s)|^\alpha u'(s)\Big|\,ds+C(T,d)
\Big(\Int_0^t\Big||u'(s)|^{l}u'(s)\Big|\,ds\Big)^{\frac{\beta+1}{l+1}}.
$$
Since $\alpha\geq l$ and $\beta\geq l$,  and since $ |u'(t)|$ is bounded we obtain
$$ \Big||u'(t)|^lu'(t)\Big|\leq C(T)\Int_0^t\Big||u'(s)|^{l}u'(s)\Big|\,ds. $$
By introducing  $w(t)= \Big||u'(t)|^lu'(t)\Big|$, we see that
$$
w(t) \leq C(T) \Int_0^t w(s)\,ds,\,\,\,\,\,\mbox{for}\,\,\,\,t\in [0,T]\subset J.
$$
which, by Gronwall's inequality, implies $w\equiv 0$ on $[0,T]$. Since $T$ is arbitrarily we conclude
 $w\equiv 0$ on $J$. A similar proof is valid if $0=\max J$, and finally for any $J$ with $0\in J$.
This concludes the proof of Proposition \ref{Pro} \end{proof}
\begin{remark}
\rm{The result of proposition \ref{Pro}  has very important consequences which will be useful throughout the rest of the paper.}
\end{remark}
\begin{corollary}\label{finite}
Let $u \in {\mathcal C}^1(J)$ be any solution of (\ref{1}) with $|u'|^lu'\in {\mathcal C}^1(J)$,\,\,\
$u \not\equiv 0$. Then for each compact interval $K\subset J$ the set
$F=\{t\in K,\,\, u(t)=0\}$ is finite.
\end{corollary}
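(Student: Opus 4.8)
The plan is to argue by contradiction, the whole point being to manufacture a point where both $u$ and $u'$ vanish and then invoke the uniqueness-at-zero result of Proposition \ref{Pro} (whose hypothesis $l\le \inf(\alpha,\beta)$ I take to be in force here). Suppose that for some compact $K\subset J$ the zero set $F=\{t\in K:\ u(t)=0\}$ is infinite. Since $u$ is continuous, $F$ is closed, and being a subset of the compact interval $K$ it is itself compact; an infinite subset of a compact set has an accumulation point, so by Bolzano--Weierstrass there is $t^*\in K\subset J$ which is a limit of points of $F$. By continuity of $u$ we immediately get $u(t^*)=0$.

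The key step is to upgrade this to $u'(t^*)=0$. Choose a sequence $(t_n)\subset F$ with $t_n\to t^*$ and $t_n\neq t^*$ for all $n$, which is possible precisely because $t^*$ is an accumulation point of $F$. Since $u\in \mathcal{C}^1(J)$ is differentiable at $t^*$, the difference quotient converges to $u'(t^*)$ along this (or any) sequence tending to $t^*$, whence
$$
u'(t^*)=\lim_{n\to\infty}\frac{u(t_n)-u(t^*)}{t_n-t^*}=\lim_{n\to\infty}\frac{0-0}{t_n-t^*}=0.
$$
Equivalently, one may apply Rolle's theorem on each interval with endpoints $t_n$ and $t^*$ to obtain interior points $s_n$ with $u'(s_n)=0$ and $s_n\to t^*$, and then pass to the limit using the continuity of $u'$; either route yields the same conclusion.

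We have thus produced $\tau:=t^*\in J$ with $u(\tau)=u'(\tau)=0$, and by hypothesis $u\in\mathcal{C}^1(J)$ with $|u'|^lu'\in\mathcal{C}^1(J)$, so Proposition \ref{Pro} applies on $J$ and forces $u\equiv 0$, contradicting $u\not\equiv 0$. Therefore $F$ is finite for every compact $K\subset J$, as claimed. I do not anticipate a genuine obstacle: the only point deserving attention is the passage from ``$t^*$ is a limit of zeros'' to ``$u'(t^*)=0$'', and this is immediate from the $\mathcal{C}^1$ regularity of $u$ alone, so that no information about the singular second-order term $(|u'|^lu')'$ near the zeros of $u'$ is needed to run the argument.
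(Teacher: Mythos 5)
Your proof is correct and follows essentially the same route as the paper: both arguments extract an accumulation point $\tau$ of the zero set, deduce $u(\tau)=u'(\tau)=0$ (the paper via Rolle's theorem between successive zeros, you via the difference quotient, which is if anything slightly cleaner), and then invoke Proposition \ref{Pro} to force $u\equiv 0$. Your explicit remark that the hypothesis $l\le\inf(\alpha,\beta)$ of Proposition \ref{Pro} must be in force is also apt, since the corollary's statement omits it even though the paper's proof relies on it.
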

\begin{proof} By contradiction. Assuming that $F$ is infinite, let $\tau\in K$ be an accumulation point of $F$.
Let $t_n \in F$ with $t_n \rightarrow \tau$ as $n \rightarrow \infty$. Clearly $u(\tau)= 0$ and since between
 $t_n$ and $t_{n+1}$ there is $A_n$ with $u'(A_n)=0$ (Rolle's Theorem)  we also have $u'(\tau)=0$.
  By Proposition \ref{Pro} we conclude that
 $u\equiv 0$ on $J$.
 \begin{corollary}\label{f^2}
Let $u \in {\mathcal C}^1(J)$ be any solution of (\ref{1}) with $|u'|^lu'\in {\mathcal C}^1(J)$,\,\,\
$u \not\equiv 0$. Then for each compact interval $K\subset J$ the set
$G=\{t\in K,\,\, u'(t)=0\}$ is finite.
\end{corollary}
{\bf{Proof.}}  Suppose that
$$
u'(t_n)=0\,\,\,\,\mbox{when}\,\,\,\ n\rightarrow \infty \Rightarrow |u'(t_n)|^lu'(t_n)= 0.
$$
There exist $A_n\in (t_n,t_{n+1})$ such that $(|u'|^lu')'(A_n)= 0$
 when $t_n\rightarrow \tau$ and $A_n\rightarrow \tau$
$$
(|u'|^lu')'(\tau) + c|u'(\tau)|^\alpha u'(\tau)+ d|u(\tau)|^\beta u(\tau)=0,
$$
since $(|u'|^lu')'(\tau) + c|u'(\tau)|^\alpha u'(\tau)=0$, therefore
 $$
 |u(\tau)|^\beta u(\tau)=0\Rightarrow u(\tau)=0
 $$
 $$ u(\tau)=u'(\tau)=0\Rightarrow u\equiv 0. $$\end{proof}
\begin{proposition}\label{Propu}
Let $a \not= 0$.
Then for $J$ an interval containing $0$ and such that $\vert J\vert$ is small enough, equation \eqref{1} has at most one  solution satisfying
$$u\in {\mathcal C}^1(J),\,\,\,\,\,\,|u'|^lu'\in {\mathcal C}^1(J)\,\,\,\,\
\mbox{and}\,\,\,\, u_0=a,\,\,\ u_1=0.$$
\end{proposition}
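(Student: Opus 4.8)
The plan is to work on a small two-sided interval around $0$, reduce the singular equation to a Volterra integral equation for $v:=u'$, and then close a Gronwall argument whose kernel is singular but integrable. Since \eqref{1} is odd in $u$, I may assume $a>0$. Set $w:=|u'|^lu'$; integrating \eqref{1} from $0$ and using $u'(0)=0$ gives, exactly as in \eqref{ABS},
\[
w(t)=-c\Int_0^t|u'|^\alpha u'\,ds-d\Int_0^t|u|^\beta u\,ds,\qquad u(t)=a+\Int_0^t u'(s)\,ds .
\]
The decisive observation is that $w\in{\mathcal C}^1(J)$ with $w(0)=0$ and, reading \eqref{1} at $t=0$, $w'(0)=-d|a|^\beta a\neq0$. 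Hence $w(t)=-d|a|^\beta a\,t+o(t)$, so for $\delta$ small enough there are constants $0<c_1<C_2$ with
\[
c_1|t|\le |w(t)|\le C_2|t|,\qquad \sgn w(t)=-\sgn t,\qquad |t|\le\delta .
\]
In particular $u'$ keeps a constant sign on each side of $0$ (so $t=0$ is a strict extremum), and every solution carrying these data obeys the same two-sided bound.

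Next I would take two such solutions $u_1,u_2$, set $v_i=u_i'$, $w_i=|v_i|^lv_i$, and write $v=g(w):=|w|^{1/(l+1)}\sgn w$. Subtracting the integral identities and estimating on $(0,\delta]$ (the interval $[-\delta,0)$ being identical after $t\mapsto-t$), the maps $v\mapsto|v|^\alpha v$ and $u\mapsto|u|^\beta u$ are Lipschitz on the relevant bounded sets, so the only delicate term is $v_1-v_2=g(w_1)-g(w_2)$. Here the lower bound is crucial: since $w_1(s),w_2(s)$ both lie in $[-C_2 s,-c_1 s]$, the function $g$ is ${\mathcal C}^1$ between them with $|g'(w)|\le\frac1{l+1}(c_1 s)^{-l/(l+1)}$, whence
\[
|v_1(s)-v_2(s)|\le \frac1{l+1}(c_1 s)^{-l/(l+1)}\,|w_1(s)-w_2(s)| .
\]
Feeding this into the two Lipschitz estimates and using $|u_1(s)-u_2(s)|\le\int_0^s|v_1-v_2|\,d\tau$ (the resulting double integral being absorbed, via Fubini, into a single integral against the same kernel times a factor $\le\delta$), the subtracted identity yields, with $h(t):=|w_1(t)-w_2(t)|$,
\[
h(t)\le \Int_0^t K(s)\,h(s)\,ds,\qquad K(s)\le C\,s^{-l/(l+1)} .
\]

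The kernel $K$ is integrable near $0$ because $l/(l+1)<1$ for every $l\ge0$; the generalized (singular) Gronwall inequality with vanishing initial constant then forces $h\equiv0$ on $(0,\delta]$. Thus $w_1=w_2$, hence $v_1=g(w_1)=g(w_2)=v_2$ and $u_1=u_2$ on $[-\delta,\delta]$, which is the assertion once $|J|$ is small. The main obstacle is precisely the non-Lipschitz character of the inverse $g(w)=|w|^{1/(l+1)}\sgn w$ at $w=0$, which is what defeats a naive contraction near the singular point $u'=0$; the argument hinges entirely on the a priori estimate $|w(t)|\ge c_1|t|$, coming from $w'(0)\neq0$, which confines $w$ to a region where $g$ is Lipschitz with an integrable (though unbounded) constant $\sim s^{-l/(l+1)}$. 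Note that, in contrast with Proposition \ref{Pro}, no relation between $l$ and $(\alpha,\beta)$ is required, since integrability of the kernel holds for all $l\ge0$.
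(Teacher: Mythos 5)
Your proof is correct and takes essentially the same route as the paper: both exploit $(|u'|^lu')'(0)=-da^{\beta+1}\neq 0$ to obtain the two-sided bound $|w(t)|\sim |t|$ (equivalently $|u'(t)|^l\geq \eta |t|^{l/(l+1)}$), subtract the two Volterra identities, and close a Gronwall argument with the singular but integrable kernel $s^{-l/(l+1)}$. The only cosmetic difference is that you run Gronwall on $h=|w_1-w_2|$ via the mean value theorem applied to $g(w)=|w|^{1/(l+1)}\sgn w$, whereas the paper runs it on $\phi(t)=\int_0^t|u'-v'|\,ds$ using the equivalent monotonicity inequality $\bigl||u'|^lu'-|v'|^lv'\bigr|\geq \inf\{|u'|^l,|v'|^l\}\,|u'-v'|$.
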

\begin{proof}
 By the odd character of the equation it is sufficient to consider the case $a>0$.  From (\ref{1}), we obtain $(|u'|^lu')'(0)=-a^{\beta+1}$ and $|u'|^l\geq \eta t^{\frac{l}{l+1}}$
  for $\vert t\vert$ small enough and some $\eta>0$.\\
  
 Integrating (\ref{1}) over $(0,t)$, we have since $u'(0) = 0$
 $$
 |u'(t)|^lu'(t) = -d\Int_0^t|u(\tau)|^\beta u(\tau)\,d\tau -c\Int_0^t|u'(\tau)|^\alpha u'(\tau)\,d\tau.
 $$
 Let $u(t)$ and $v(t)$ be two solutions, then $w(t)=u(t)-v(t)$ satisfies
 {\small\begin{equation}\label{hh}
\begin{split}
 |u'(t)|^lu'(t)- |v'(t)|^lv'(t)&= -d\Int_0^t(|u(\tau)|^\beta u(\tau)-|v(\tau)|^\beta v(\tau))\,d\tau\\& -c\Int_0^t|(u'(\tau)|^\alpha u'(\tau)-|v'(\tau)|^\alpha v'(\tau))\,d\tau
 \end{split}
\end{equation}}
Furthermore,
$$
||u'(t)|^lu'(t)- |v'(t)|^lv'(t)|\geq \inf \{|u'|^l,|v'|^l\}|u'-v'|\geq \eta t^{\frac{l}{l+1}}|u'-v'|,
$$
and from (\ref{hh}), we now deduce
{\small\begin{equation}\label{hhh}
\begin{split}
 |w'(t)|&\leq\frac{C}{t^{\frac{l}{l+1}}}\Int_0^t\Int_0^t |w'(\tau)|\,d\tau\,ds+
 \frac{C}{t^{\frac{l}{l+1}}}\Int_0^t|w'(\tau)|\,d\tau\\&\leq C(T)t^{-\frac{l}{l+1}}\Int_0^t |w'(\tau)|\,d\tau.
 \end{split}
\end{equation}}
Setting  $\phi(t)=\Int_0^t|w'(\tau)|\,d\tau$, by solving \eqref{hhh} on $[\delta, t]$ we obtain 
$$
\phi(t)\leq \phi(\delta)e^{C(T)\int_\delta^t s^{-l/(l+1)}\,ds}
$$
And by letting $\delta \rightarrow 0$ we conclude that  $w(t)=0.$ A similar argument gives the uniqueness for $t$ negative with $\vert t\vert$ small enough.
\end{proof}

We now state the main result of this section

\begin{theorem}\label{Th} Assume that  $l\leq \inf (\alpha,\beta)$. then for any $(u_0,u_1)\in \R^2$,
the equation (\ref{1}) has a  unique global solution satisfying
$$u\in {\mathcal C}^1(\R^+),\,\,\,\,\,\,|u'|^lu'\in {\mathcal C}^1(\R^+)\,\,\,\,\
\mbox{and}\,\,\,\, u_0=u(0),\,\,\ u_1=u'(0).$$
\end{theorem}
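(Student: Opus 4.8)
Since Proposition~\ref{Pr1} already delivers a global solution for arbitrary $(u_0,u_1)\in\R^2$, the real content of the theorem is \emph{uniqueness}: I only have to show that two solutions on $\R^+$ sharing the same Cauchy data coincide, after which the solution produced by Proposition~\ref{Pr1} is automatically the unique one. My plan is to establish a local uniqueness statement valid at an \emph{arbitrary} point $t_0\ge 0$, and then to propagate it over all of $\R^+$ by a maximal-interval-of-coincidence argument.

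For the local statement, suppose $u$ and $v$ solve \eqref{1} near $t_0$ with $u(t_0)=v(t_0)$ and $u'(t_0)=v'(t_0)$, and I split into three cases according to this common data. If $u'(t_0)\neq 0$, then by continuity $|u'|^l$ is bounded below by a positive constant on a neighbourhood of $t_0$, the leading term is non-degenerate, and \eqref{1} can be recast as $u''=-\bigl(c|u'|^\alpha u'+d|u|^\beta u\bigr)/\bigl((l+1)|u'|^l\bigr)$, whose right-hand side is of class ${\mathcal C}^1$ — hence locally Lipschitz — in $(u,u')$ there; classical Cauchy--Lipschitz theory then yields $u\equiv v$ near $t_0$. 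If $u'(t_0)=0$ but $u(t_0)\neq 0$, this is precisely the degenerate situation treated in Proposition~\ref{Propu}; since \eqref{1} is autonomous I translate $t_0$ to the origin and conclude $u\equiv v$ on a small interval around $t_0$. Finally, if $u(t_0)=u'(t_0)=0$, Proposition~\ref{Pro} applies to $u$ and to $v$ \emph{separately} and forces each to vanish identically, so again $u\equiv v$.

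To globalize, let $T^*:=\sup\{T\ge 0:\ u\equiv v\text{ on }[0,T]\}$. The case analysis applied at $t_0=0$ gives $T^*>0$, and the value $T^*=+\infty$ is exactly the desired conclusion. If instead $T^*<+\infty$, then $u\equiv v$ on $[0,T^*)$, whence $u(T^*)=v(T^*)$ and $u'(T^*)=v'(T^*)$ by ${\mathcal C}^1$ continuity; applying the local uniqueness above at $t_0=T^*$ extends the coincidence to $[0,T^*+\delta]$ for some $\delta>0$ (or, in the doubly-degenerate case, shows $u\equiv v\equiv 0$ on all of $\R^+$), contradicting the definition of $T^*$. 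Hence $T^*=+\infty$ and $u\equiv v$ on $\R^+$.

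The one genuinely delicate point is the case $u'(t_0)=0$, $u(t_0)\neq 0$, where the coefficient $|u'|^l$ of the top-order term degenerates and classical ODE theory is unavailable; this is where the hypothesis $l\le\inf(\alpha,\beta)$ enters, through the lower bound $|u'|^l\gtrsim|t-t_0|^{l/(l+1)}$ (a consequence of $(|u'|^lu')'(t_0)=-d|u(t_0)|^\beta u(t_0)\neq 0$) and the ensuing singular Gronwall estimate already carried out in Proposition~\ref{Propu}. Granting that local result, the continuation argument is routine, the only care needed being to check that the three cases really exhaust all possibilities at $t_0$ and that each cited proposition applies in the translated (non-origin) situation, which is legitimate because \eqref{1} is autonomous.
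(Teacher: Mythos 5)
Your proof is correct and takes essentially the same approach as the paper: existence from Proposition \ref{Pr1}, and uniqueness via the identical three-case analysis at a point (classical Cauchy--Lipschitz theory when $u'(t_0)\neq 0$, Proposition \ref{Propu} when $u'(t_0)=0$ and $u(t_0)\neq 0$, Proposition \ref{Pro} when $u(t_0)=u'(t_0)=0$). The only difference is that the paper leaves the propagation step implicit, while your maximal-interval-of-coincidence argument spells it out explicitly.
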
\begin{proof}The existence follows from Proposition \ref{Pr1}. Uniqueness follows from Proposition \ref{Pro} and Proposition \ref{Propu} since in the non-singular case $u'(t_0) \not= 0$ local uniqueness is a consequence of classical results, while in the case $u'(t_0) = 0$, we can either use Proposition \ref{Pro}  if $u(t_0) = 0$ or Proposition \ref{Propu} if $u(t_0) \not = 0$.
 \end{proof}

 \begin{remark}
 If $u \not\equiv 0$, at any point $t_0$ where $u(t_0) \neq 0$ and $u'(t_0)=0$, the second derivative $u''(t_0)$
  does not exist. At least when $\alpha > l-1$. Indeed for $t\neq t_0$  and $|t-t_0|< \varepsilon$ we have
  $$
  u''(t)=-\frac{c}{l+1}|u'|^{\alpha-l}u'(t)-\frac{d|u|^\beta u}{(l+1)|u'(t)|^l},
  $$
  hence, as $t\rightarrow t_0$,\,\, $u''(t)$ has a constant sign and
  $$
  \vert u''(t)\vert \rightarrow +\infty\,\,\,\,\,\mbox{as}\,\,\, t\rightarrow t_0.
  $$
 \end{remark}
 This implies that $u'$ is not differentiable at $t_0$.
 \section {Energy estimates  for equation (\ref{1})}
  We define the energy associated to the solution of the problem by
the following formula
\begin{equation}\label{2}
E(t) = \Frac{l+1}{l +2}|u'|^{l +2} + \Frac{d}{\beta +2}|u|^{\beta +2}.
\end{equation}
By multiplying equation (\ref{1}) by $u'$, we obtain that on any interval where $u$ is
${\mathcal C}^2$,\,\, $E(t)$ is ${\mathcal C}^1$ with
\begin{equation}\label{3}
\Frac{d}{dt}E(t) = -c|u'|^{\alpha+2}\leq 0.
\end{equation}
In particular (\ref{3}) holds, whenever $u'(t)\neq 0$.\\ Now let $t_0$ be such that $u'(t_0)=0$. As a consequence of Corollary  \ref{f^2} there exists
$\varepsilon> 0$ such that $u\in {\mathcal C}^2((t_0,t_0+\varepsilon]\cup[t_0-\varepsilon, t_0))$.
Integrating (\ref{3}) over $(\tau,t)$, 
$$
E(t)-E(\tau)=-c\Int_\tau^t|u'(s)|^{\alpha+2}\,ds,\,\,\,\,\, t_0< \tau \leq t \leq t_0+\varepsilon,
$$
By letting $\tau\rightarrow t_0$, we obtain
$$
E(t)-E(t_0)=-c\Int_{t_0}^t|u'(s)|^{\alpha+2}\,ds,
$$
In particular we obtain that $E$ is right-differentiable at $t_0$ with right-derivative equal to $-c|u'|^{\alpha+2} $. A similar calculation on the left allows to conclude that $E$ is differentiable at  $t_0$ and finally  (\ref{3}) is true at any point.

\begin{theorem}\label{Th2}
Assuming $\alpha> l$, there exists a positive constant
$\eta$ such that if $u$ is any solution of (\ref{1}) with $E(0)\neq 0$
\begin{equation}\label{22}
\liminf_{t\rightarrow +\infty}t^{\frac{l+2}{\alpha-l}}E(t)\geq \eta.
\end{equation}
Moreover,
\begin{itemize}
 \item[(i)] if $\alpha \geq \frac{\beta(1+l)+l}{\beta+2}$,
     then there is a constant $C(E(0))$ depending on
     $E(0)$ such that
     $$
     \forall t\geq 1,\,\,\,\ E(t)\leq C(E(0)) t^{-\frac{l+2}{\alpha-l}},
     $$
     \item[(ii)] if $\alpha < \frac{\beta(1+l)+l}{\beta+2}$ ,
         then there is a constant $C(E(0))$ depending
         on $E(0)$ such that
     $$
     \forall t\geq 1,\,\,\,\ E(t)\leq C(E(0)) t^{-\frac{(\alpha+1)(\beta+2)}
     {\beta-\alpha}}.
     $$
 \end{itemize}
\end{theorem}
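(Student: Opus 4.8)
The plan is to prove the two inequalities separately, the lower bound directly from the dissipation identity \eqref{3} and the upper bounds by a perturbed-energy (Lyapunov) argument adapted from \cite{har2}. For \eqref{22}, since $\frac{l+1}{l+2}|u'|^{l+2}\le E$ gives $|u'|\le\big(\frac{l+2}{l+1}E\big)^{1/(l+2)}$, the identity \eqref{3} yields
\begin{equation*}
\frac{d}{dt}E(t)=-c|u'|^{\alpha+2}\ge -c\Big(\frac{l+2}{l+1}\Big)^{\frac{\alpha+2}{l+2}}E(t)^{\frac{\alpha+2}{l+2}}.
\end{equation*}
Writing $\gamma=\frac{\alpha+2}{l+2}>1$ (this is where $\alpha>l$ enters) and noting $E(t)>0$ for all $t$ — otherwise $E(\tau)=0$ would force $u(\tau)=u'(\tau)=0$ and hence $u\equiv0$ by Proposition \ref{Pro}, against $E(0)\neq0$ — I would integrate this scalar inequality. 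From $\frac{d}{dt}\big(E^{1-\gamma}\big)=(1-\gamma)E^{-\gamma}E'\le(\gamma-1)K$ one gets $E(t)^{1-\gamma}\le E(0)^{1-\gamma}+(\gamma-1)Kt$, that is $E(t)\ge\big(E(0)^{1-\gamma}+(\gamma-1)Kt\big)^{-1/(\gamma-1)}$, which is asymptotic to a positive constant times $t^{-1/(\gamma-1)}=t^{-(l+2)/(\alpha-l)}$, giving \eqref{22}.

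For the upper bounds the natural multiplier is $u$: multiplying \eqref{1} by $u$ and using $(|u'|^lu')'u=\frac{d}{dt}(|u'|^lu'\,u)-|u'|^{l+2}$ gives, for $\phi:=|u'|^lu'\,u$, the identity
\begin{equation*}
\phi'(t)=|u'|^{l+2}-c|u'|^\alpha u'\,u-d|u|^{\beta+2},
\end{equation*}
valid for every $t$ since $|u'|^lu'\in{\mathcal C}^1$ (so the isolated singular points of Corollary \ref{f^2} cause no trouble here). I would then set $E_\e:=E+\e E^\sigma\phi$ for small $\e>0$ and an exponent $\sigma>0$ to be fixed, and differentiate, using $E'=-c|u'|^{\alpha+2}$:
\begin{equation*}
E_\e'=-c|u'|^{\alpha+2}+\e\sigma E^{\sigma-1}E'\phi+\e E^\sigma\big(|u'|^{l+2}-c|u'|^\alpha u'\,u-d|u|^{\beta+2}\big).
\end{equation*}
The term $-\e dE^\sigma|u|^{\beta+2}$ is favourable and controls the potential part of $E^{\sigma+1}$; the two terms needing attention are the kinetic term $\e E^\sigma|u'|^{l+2}$ and the damping cross term $\e cE^\sigma|u'|^\alpha u'\,u$.

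The heart of the argument is to absorb these two terms. Using $|u'|\lesssim E^{1/(l+2)}$ and $|u|\lesssim E^{1/(\beta+2)}$, I would apply Young's inequality in the two forms $E^\sigma|u'|^{l+2}\le\delta|u'|^{\alpha+2}+C_\delta E^{\sigma(\alpha+2)/(\alpha-l)}$ and $E^{\sigma+1/(\beta+2)}|u'|^{\alpha+1}\le\delta|u'|^{\alpha+2}+C_\delta E^{(\sigma+1/(\beta+2))(\alpha+2)}$. A direct check shows the two residual exponents are $\ge\sigma+1$ precisely when $\sigma\ge\frac{\alpha-l}{l+2}$ and $\sigma\ge\frac{\beta-\alpha}{(\alpha+1)(\beta+2)}$ respectively. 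Choosing $\delta$ large and then $\e$ small, so that the $|u'|^{\alpha+2}$ contributions are absorbed into $-c|u'|^{\alpha+2}$ and a small multiple of $E^{\sigma+1}$ into itself, I would obtain $E_\e'\le -kE^{\sigma+1}$ for some $k>0$. Taking $\sigma=\max\big(\frac{\alpha-l}{l+2},\frac{\beta-\alpha}{(\alpha+1)(\beta+2)}\big)$ meets both thresholds and also gives $\e E^\sigma|\phi|\le\tfrac12 E$, whence $\tfrac12 E\le E_\e\le\tfrac32 E$; then $E_\e'\le -k'E_\e^{1+\sigma}$ integrates to $E_\e(t)\le C(1+t)^{-1/\sigma}$ and hence the same bound for $E$. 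The case distinction is exactly the comparison of the two candidate exponents: $\frac{\alpha-l}{l+2}\ge\frac{\beta-\alpha}{(\alpha+1)(\beta+2)}$ iff $\alpha\ge\frac{\beta(1+l)+l}{\beta+2}$, giving $1/\sigma=\frac{l+2}{\alpha-l}$ in case (i) and $1/\sigma=\frac{(\alpha+1)(\beta+2)}{\beta-\alpha}$ in case (ii).

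The main obstacle I anticipate is the mismatch of exponents in the dissipation: the energy identity only controls $|u'|^{\alpha+2}$, whereas the kinetic energy carries the lower power $|u'|^{l+2}$ with $l+2<\alpha+2$; reconciling these through a weighted Young inequality, with the weight $E^\sigma$ tuned so that the leftover power is exactly $E^{\sigma+1}$, is what pins down $\sigma$ and produces the two regimes. A secondary point requiring care is that all manipulations stay within the low-regularity class $|u'|^lu'\in{\mathcal C}^1$; here the finiteness of the zero set of $u'$ (Corollary \ref{f^2}) and the fact that $\phi$ and $E$ are genuinely ${\mathcal C}^1$ legitimize differentiating $E_\e$ at every point.
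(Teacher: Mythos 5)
Your argument is correct and shares the paper's overall strategy --- a perturbed energy built on the multiplier $u\,|u'|^l u'$, Young inequalities to absorb the bad terms, and integration of a differential inequality $E_\varepsilon'\le -k E_\varepsilon^{1+\sigma}$ --- but your choice of weight is different and changes the bookkeeping in a non-trivial way. The paper takes $E_\varepsilon=E+\varepsilon|u|^{\gamma}u\,|u'|^lu'$: since the weight is a power of $|u|$, every Young residual is a pure power of $|u|$ and is swallowed directly by the dissipative term $-\varepsilon d|u|^{\gamma+\beta+2}$, so no circularity ever threatens; the price is three separate thresholds on $\gamma$ (namely $\gamma\ge\frac{\beta-l}{l+2}$, $\gamma\ge\frac{(\beta+2)(\alpha-l)}{l+2}$, $\gamma\ge\frac{\beta-\alpha}{\alpha+1}$). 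Your weight $E^{\sigma}$ produces residuals that are powers of $E$ itself, so the only negative terms available, $-c|u'|^{\alpha+2}$ and $-\varepsilon dE^{\sigma}|u|^{\beta+2}$, must first be shown to dominate a multiple of $E^{\sigma+1}$; this is the self-absorption you allude to, and it does close, precisely because the Young constant $C_\delta$ tends to $0$ as $\delta\to\infty$, so that $E^{\sigma}|u'|^{l+2}\le\delta|u'|^{\alpha+2}+C_\delta C(E(0))E^{\sigma+1}$ lets you move the $E^{\sigma}|u'|^{l+2}$ contribution of $E^{\sigma+1}$ back to the left-hand side. Note that your two thresholds are exactly the paper's second and third conditions divided by $\beta+2$ (consistent with $|u|^{\gamma}\sim E^{\gamma/(\beta+2)}$), and your case comparison reproduces the paper's. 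What your version buys is one fewer explicit condition (equivalence of $E_\varepsilon$ with $E$ follows from the chosen $\sigma$); what the paper's version buys is absorption with no circularity to worry about.

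Two points in your write-up need repair, though neither is fatal. First, your expansion of $E_\varepsilon'$ contains the chain-rule term $\varepsilon\sigma E^{\sigma-1}E'\phi$, which you display but never estimate; since $\sigma<1$ it carries a negative power of $E$, and one must check $|E^{\sigma-1}E'\phi|\le c\,|u'|^{\alpha+2}\,C\,E^{\sigma-\frac{1}{l+2}+\frac{1}{\beta+2}}\le C(E(0))|u'|^{\alpha+2}$, which requires $\sigma\ge\frac{\beta-l}{(l+2)(\beta+2)}$; fortunately this is the same condition as your equivalence claim and is satisfied by your $\sigma$ in both cases (with equality at $\alpha=\frac{\beta(1+l)+l}{\beta+2}$), so the term is absorbed into $-c|u'|^{\alpha+2}$ for $\varepsilon$ small. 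Second, to get $E>0$ you invoke Proposition \ref{Pro}, which assumes $l\le\inf(\alpha,\beta)$, whereas the theorem assumes only $\alpha>l$; this extra hypothesis is unnecessary, because your own integrated bound $E(t)\ge\bigl(E(0)^{1-\gamma}+(\gamma-1)Kt\bigr)^{-1/(\gamma-1)}$ already rules out vanishing: if $E$ first vanished at time $T$, letting $t\to T^{-}$ in the bound would give $E(T)>0$, a contradiction.
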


\begin{proof}
From (\ref{2}), we have
$$
|u'(t)|^{\alpha+2}\leq C(l,\alpha) E(t)^{\frac{\alpha+2}{l+2}},
$$
where $C(l,\alpha)$ is a positive constant, hence from (\ref{3}) we
deduce
$$
\frac{d}{dt}E(t)\geq -C(l,\alpha) E(t)^{\frac{\alpha+2}{l+2}},
$$
Assuming  $\alpha>l$  we derive
\begin{equation*}
\begin{split}
\frac{d}{dt}E(t)^{-\frac{\alpha-l}{l+2}}&=
-\frac{\alpha-l}{l+2}E(t)^{-\frac{\alpha+2}{l+2}}E'(t)\\&\leq
\frac{\alpha-l}{l+2} c C(l,\alpha) = C_1.
\end{split}
\end{equation*}
By integrating, we get
$$
E(t)\leq \Big(E(0)^{-\frac{\alpha-l}{l+2}}+C_{1}t\Big)^{-\frac{l+2}{\alpha-l}},
$$
implying
$$
\liminf_{t\rightarrow +\infty}t^{\frac{l+2}{\alpha-l}}E(t)\geq \eta =
C_1^{-\frac{l+2}{\alpha-l}}.
$$
Hence (\ref{22}) is proved. Now, in order to show $(i)$ and $(ii)$,
we consider the perturbed energy function
\begin{equation}\label{4}
E_\varepsilon (t) = E(t) + \varepsilon  |u|^{\gamma}u|u'|^l u',
\end{equation}
where $l > 0$,\,\,$\gamma >0$ and $\varepsilon > 0.$\\
By Young's inequality, we have
$$
\vert |u|^{\gamma}u|u'|^l u'\vert \leq
 c_1|u|^{(\gamma+1)(l+2)}+c_2|u'|^{l+2},
$$
we choose $\gamma$ so that $(\gamma+1)(l+2)\geq \beta +2$ which reduces  to
\begin{equation}\label{55}
\gamma \geq \Frac{\beta -l}{l +2}.
\end{equation}
Hence, since $u$ is bounded, along the trajectory we have for some   $C_1> 0, M>0$
 {\small\begin{equation}
\begin{split}\label{5}
\vert |u|^{\gamma}u|u'|^l u'\vert &\leq C_1 |u|^{\beta+2}+c_2
|u'|^{l+2}\\&\leq M E(t).
\end{split}
\end{equation}}
Then, by using (\ref{5}), we obtain from (\ref{4})
$$
(1-M\varepsilon)E(t)\leq E_\varepsilon(t) \leq (1+M\varepsilon)E(t).
$$
Taking  $\varepsilon\leq \Frac{1}{2M}$, we deduce
\begin{equation}\label{6}
\forall t\geq 0,\,\,\, \Frac{1}{2}E(t)\leq E_\varepsilon(t)\leq 2 E(t).
\end{equation}
On the other hand
{\small\begin{equation}\label{33a}
\begin{split}
E'_\varepsilon(t) &= -c|u'|^{\alpha+2} + \varepsilon
(\gamma+1)|u|^\gamma|u'|^{l+2}+ \varepsilon
|u|^{\gamma}u(-c|u'|^{\alpha}u'-d|u|^{\beta}u),\\ E'_\varepsilon(t)
&= -c|u'|^{\alpha+2} -\varepsilon d |u|^{\gamma+\beta+2} +
\varepsilon (\gamma+1)|u|^\gamma|u'|^{l+2} -\varepsilon
c|u|^\gamma u |u'|^\alpha u'.
\end{split}
\end{equation}}
By using Young's inequality, with the conjugate exponents
$\frac{\alpha+2}{l+2}$ and $\frac{\alpha+2}{\alpha-l}$, we get
$$
|u|^\gamma|u'|^{l+2} \leq \delta|u|^{\frac{\gamma(\alpha+2)}{\alpha-l}}
+ C(\delta)|u'|^{\alpha+2},
$$
we assume $$\frac{(\alpha+2)\gamma}{\alpha-l}\geq \beta+\gamma
+2,$$ which reduces to the condition
\begin{equation}\label{555}
 \gamma \geq
\frac{(\beta+2)(\alpha-l)}{l+2},
\end{equation}
and taking  $\delta$ small enough, we have for some $P>0$
\begin{equation}\label{123}
\varepsilon (\gamma+1)|u|^\gamma|u'|^{l+2} \leq
\frac{\varepsilon d}{4}|u|^{\beta+\gamma +2}
+ P\varepsilon|u'|^{\alpha+2}.
\end{equation}
Using (\ref{123}), we have from (\ref{33a}) that
\begin{equation}\label{234}
E_\varepsilon'(t)\leq (-c+P\varepsilon)|u'|^{\alpha+2}
-\Frac{3d\varepsilon}{4}|u|^{\beta+\gamma+2}
-c\varepsilon|u|^{\gamma}u|u'|^{\alpha}u'.
\end{equation}
Applying Young's inequality, with the conjugate exponents
$\frac{\alpha+2}{\alpha+1}$ and $\frac{1}{\alpha+2}$, we have
$$
-c|u|^{\gamma}u|u'|^{\alpha}u'\leq \delta|u|^{(\gamma+1)(\alpha+2)}
+C'(\delta)|u'|^{\alpha+2}.
$$
This term will be dominated by the negative terms assuming
$$(\gamma+1)(\alpha+2)\geq \beta+\gamma+2\Leftrightarrow
(\alpha+1)(\gamma+1)\geq \beta+1.$$ This is equivalent to the condition
\begin{equation}\label{5555}
\gamma \geq \frac{\beta-\alpha}{\alpha+1},
\end{equation}
and taking $\delta$ small enough
$$
-c\varepsilon|u|^{\gamma}u|u'|^{\alpha}u'\leq \Frac{\varepsilon d}{4}|u|^{\beta+\gamma+2}
+P'\varepsilon|u'|^{\alpha+2}.
$$
By replacing in (\ref{234}), we have
$$
E'_\varepsilon(t)\leq(-c+Q\varepsilon)|u'|^{\alpha+2}-\Frac{\varepsilon d}{2}|u|^{\beta+\gamma+2},
$$
where $Q=P+P'$. By choosing $\varepsilon$ small, we get
{\small\begin{equation}\label{0}
 \begin{split}
E'_\varepsilon(t)&\leq-\frac{\varepsilon}{2}\Big(|u'|^{\alpha+2}
+|u|^{\beta+\gamma+2}\Big)\\&\leq -\frac{\varepsilon}{2}
\Big((|u'|^{l+2})^{\frac{\alpha+2}{l+2}}
+(|u|^{\beta+2})^{\frac{\beta+\gamma+2}{\beta+2}}\Big).
\end{split}
\end{equation}}
This inequality will be satisfied under the assumptions (\ref{55}), (\ref{555}) and (\ref{5555}) which lead to the sufficient condition
\begin{equation}
\gamma \geq \gamma_0 = \max\Big\{\frac{\beta-l}{l+2},\frac{(\beta+2)(\alpha-l)}{l+2},
\frac{\beta-\alpha}{\alpha+1}\Big\}.
\end{equation}
We now  distinguish $2$ cases.

\begin{itemize}

 \item[(i)] If $\alpha \geq \Frac{\beta(1+l)+l}{\beta +2}$,
     then clearly $\Frac{(\beta+2)(\alpha-l)}{l+2} \geq
     \Frac{\beta-l}{l+2}$
moreover
$$
\Frac{\beta-\alpha}{\alpha+1}=\Frac{\beta+1}{\alpha+1}-1 \leq
\Frac{\beta+1}{\frac{\beta(1+l)+l}{\beta+2}+1}-1 = \Frac{\beta-l}{l+2}.
$$
In this case $\gamma_0=\frac{(\beta+2)(\alpha-l)}{l+2}$ and
choosing $\gamma=\gamma_0$, we find
$$
\beta+\gamma+2 = \Frac{\alpha+2}{l+2}(\beta+2),
$$
since $ \Frac{\beta+\gamma+2}{\beta+2} =1+
\Frac{\alpha-l}{l+2}$, replacing in (\ref{0}), we obtain for some $\rho>0$
\begin{equation}
\begin{split}
E'_\varepsilon(t)\leq
 -\rho E(t)^{1+\frac{\alpha-l}{l+2}}\leq -\rho'
E_\varepsilon(t)^{1+\frac{\alpha-l}{l+2}},
\end{split}
\end{equation}
where $\rho$ and $\rho'$ are a positive constants.

 \item[(ii)] If $ \alpha < \Frac{\beta(l+1)+l}{\beta+2}$ then
     $\Frac{(\beta+2)(\alpha-l)}{l+2}<\Frac{\beta-l}{l+2}$
     and
$$
\Frac{\beta-\alpha}{\alpha+1}-\Frac{\beta-l}{l+2}
= \Frac{(\beta-\alpha)(l+2)-(\beta-l)(\alpha+1)}{(\alpha+1)(l+2)}
=\Frac{\beta(l+1)+l-\alpha(\beta+2)}{(\alpha+1)(l+2)}> 0.
$$
In this case $\gamma_0= \Frac{\beta-\alpha}{\alpha+1}$ and
choosing $\gamma= \gamma_0,$ we find
\begin{equation}\label{567}
\beta+\gamma+2 =(\beta+2)\Big(1+\Frac{\gamma}{\beta+2}\Big)
=(\beta+2)\Big(1+\Frac{\beta-\alpha}{(\alpha+1)(\beta+2)}\Big),
\end{equation}
since $\gamma > \Frac{(\beta+2)(\alpha-l)}{l+2}$, we have
$$
\Frac{\beta+\gamma+2}{\beta+2}=1 + \Frac{\gamma}{\beta+2}> 1+\Frac{\alpha-l}{l+2}
=\Frac{\alpha+2}{l+2},
$$
replacing in (\ref{0}), we obtain {\small\begin{equation*}
 \begin{split}
E'_\varepsilon(t)&\leq -\frac{\varepsilon}{2}
(|u'|^{\frac{(\alpha+2)(l+2)}{l+2}}
+|u|^{\frac{(\beta+\gamma+2)(\beta+2)}{\beta+2}})\\&\leq
-\delta\varepsilon (|u'|^{(l+2)}
+|u|^{(\beta+2)})^\frac{\beta+\gamma+2}{\beta+2},
\end{split}
\end{equation*}}
for some $\delta> 0$. Using (\ref{567}), we have
\begin{equation}
E'_\varepsilon(t)\leq -\rho E^{(1+\frac{\beta-\alpha}{(\alpha+1)(\beta+2)})}
\leq-\rho' E_\varepsilon(t)^{(1+\frac{\beta-\alpha}{(\alpha+1)(\beta+2)})},
\end{equation}
where $\rho$ and $\rho'$ are positive constants.

 \end{itemize}

This completes the proof of Theorem \ref{Th2}.
\end{proof}

\begin{corollary}\label{CO}
If $\alpha \geq \frac{\beta(l+1)+l}{\beta+2}$, then there is a
constant $C$ depending on $E(0)$ such that
$$
\forall t\geq 1,\,\,\,\, |u(t)|\leq C t^{-\frac{l+2}{(\alpha-l)(\beta+2)}},
$$
$$
\forall t\geq 1,\,\,\,\, |u'(t)|\leq C t^{-\frac{1}{\alpha-l}}.
$$
\end{corollary}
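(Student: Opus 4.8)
The plan is to read off both pointwise estimates directly from the energy decay rate established in part (i) of Theorem \ref{Th2}, using nothing more than the definition \eqref{2} of the energy. Under the standing hypothesis $\alpha \geq \frac{\beta(l+1)+l}{\beta+2}$, Theorem \ref{Th2}(i) supplies a constant $C(E(0))$ such that
\begin{equation*}
\forall t\geq 1,\qquad E(t)\leq C(E(0))\, t^{-\frac{l+2}{\alpha-l}}.
\end{equation*}
This single bound is the whole engine of the proof; everything else is extraction.

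The key observation is that the two summands in \eqref{2} are each nonnegative, so each is individually dominated by $E(t)$. First I would write
\begin{equation*}
\frac{l+1}{l+2}|u'(t)|^{l+2}\leq E(t),
\qquad
\frac{d}{\beta+2}|u(t)|^{\beta+2}\leq E(t),
\end{equation*}
and then insert the decay estimate on the right-hand side of each. This gives
\begin{equation*}
|u'(t)|^{l+2}\leq \frac{l+2}{l+1}\,C(E(0))\, t^{-\frac{l+2}{\alpha-l}},
\qquad
|u(t)|^{\beta+2}\leq \frac{\beta+2}{d}\,C(E(0))\, t^{-\frac{l+2}{\alpha-l}}.
\end{equation*}

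Finally I would take roots and track the exponents. Raising the first inequality to the power $\frac{1}{l+2}$ divides the exponent $-\frac{l+2}{\alpha-l}$ by $l+2$, producing $t^{-\frac{1}{\alpha-l}}$; raising the second to the power $\frac{1}{\beta+2}$ divides the same exponent by $\beta+2$, producing $t^{-\frac{l+2}{(\alpha-l)(\beta+2)}}$. Absorbing all multiplicative constants (which depend only on $l,\beta,d$ and $E(0)$) into a single constant $C$ yields exactly the two claimed bounds for $t\geq 1$. There is no genuine obstacle: the corollary is an immediate consequence of Theorem \ref{Th2}(i), and the only thing demanding a moment's attention is the bookkeeping of the exponents under the root extraction, which comes out to match the statement precisely.
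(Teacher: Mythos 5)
Your proof is correct and is precisely the argument the paper intends: the corollary is stated without proof as an immediate consequence of Theorem \ref{Th2}(i), obtained by bounding each nonnegative summand of $E(t)$ in \eqref{2} by $E(t)$ itself and extracting the appropriate roots. The exponent bookkeeping in your final step matches the statement exactly, so nothing is missing.
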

\section{Oscillatory behavior of solutions} Throughout this section, we assume $ l\le \beta $ and $ l<\alpha $ in order for Corollary  \ref{finite} to be applicable and Theorem \ref{Th2} to make sense. We start with a result showing that for $\alpha$ large, all non-trivial solutions are oscillatory.  More precisely we have
\begin{theorem}\label{Th1}
Assume that
\begin{equation}\label{CO1}
\alpha > \Frac{\beta(l+1)+l}{\beta+2}
\end{equation}
or
\begin{equation}\label{CO2}
\alpha = \Frac{\beta(l+1)+l}{\beta+2}\mbox{and}\,\,\,
c< c_0 = (\beta+2)\Big(\Frac{(\beta+2)(l+1)}
{d(\beta+1)(l+2)}\Big)^{\frac{\beta+1}{\beta+2}}.
\end{equation}
Then, any solution $u(t)$ of (\ref{1}) which is
not identically $0$ changes sign on each interval
$(T,\infty)$ and the same thing is true for $u'(t)$  .
\end{theorem}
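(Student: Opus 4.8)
The plan is to argue by contradiction, ruling out that a nontrivial solution can keep a constant sign on some half-line. By the oddness of \eqref{1} we may assume the solution is eventually nonnegative, sharpen this to a strictly monotone regime, and then derive a contradiction from a phase-plane analysis. Suppose $u\not\equiv 0$ but $u$ does not change sign on some $(T,\infty)$, say $u\ge 0$ there. By Corollary~\ref{finite} the zeros of $u$ on compact subintervals are finite, and if $u(t_0)=0$ with $u\ge 0$ nearby then $t_0$ is an interior minimum, forcing $u'(t_0)=0$ and hence $u\equiv 0$ by Proposition~\ref{Pro}; so after enlarging $T$ we have $u>0$ on $(T,\infty)$. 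Since $\alpha\ge\frac{\beta(l+1)+l}{\beta+2}$ in both \eqref{CO1} and \eqref{CO2}, Theorem~\ref{Th2}(i) gives $E(t)\to 0$, whence $u\to 0$ and $u'\to 0$. At any $t_0$ with $u'(t_0)=0$ and $u(t_0)>0$, the Remark following Theorem~\ref{Th} shows $u''\to-\infty$ with constant sign, so such a $t_0$ is a strict local maximum of $u$; two maxima would enclose a minimum, i.e.\ another (impossible) zero of $u'$, so $u'$ vanishes at most once on $(T,\infty)$, and since $u$ decreases to $0$ its surviving sign is negative. I am thus reduced to the monotone regime $u>0$, $u'<0$, $u,u'\to 0$. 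Once $u$ is shown to change sign on every $(T,\infty)$, the corresponding statement for $u'$ follows by Rolle's theorem applied between consecutive zeros of $u$, so it suffices to contradict this monotone regime.

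The core step is a phase-plane analysis. Writing $p:=-u'>0$ and regarding $p$ as a function of $u\in(0,u(T)]$ (admissible because $u$ is strictly monotone), equation \eqref{1} becomes, with $W:=p^{\,l+2}=|u'|^{\,l+2}$,
\[
\frac{l+1}{l+2}\,\frac{dW}{du}=c\,W^{\frac{\alpha+1}{l+2}}-d\,u^{\beta+1},\qquad W>0,\quad W(u)\to 0\ \text{as}\ u\to 0^{+}.
\]
The decisive quantity is the sign of $\beta(l+1)+l-\alpha(\beta+2)$, which governs the competition near $u=0$ between the dissipative term $cW^{(\alpha+1)/(l+2)}$ and the restoring term $du^{\beta+1}$. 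A power ansatz $W\sim K u^{m}$ balances dissipation against the restoring force only when $m=\frac{(\beta+1)(l+2)}{\alpha+1}$, and one computes
\[
m-(\beta+2)=\frac{\beta(l+1)+l-\alpha(\beta+2)}{\alpha+1}.
\]
Under \eqref{CO1} this is negative, so $m-1<\beta+1$: the inertial term $dW/du$ dominates the dissipative term near $u=0$, the dissipative balance is inconsistent, and the equation is asymptotically conservative. A conservative orbit reaches $u=0$ with $W>0$ rather than converging to the origin, which contradicts $W(u)\to 0$ (equivalently $u'\to 0$) established above, and hence contradicts $u>0$ on $(T,\infty)$.

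The main obstacle is to turn this heuristic into a rigorous statement, namely that $W$ cannot tend to $0$ as $u\to 0^+$ under \eqref{CO1}. I would do this with a barrier/comparison argument for the scalar ODE above, using the nullcline $W_N(u)=(d/c)^{(l+2)/(\alpha+1)}u^{m}$ and the sign of $dW/du$ on either side of it: when $W$ is small relative to $u^{m}$ the right-hand side is negative, so $W$ increases as $u$ decreases, trapping the orbit away from $0$, and the inequality $m<\beta+2$ coming from \eqref{CO1} is exactly what lets the inertial slope beat the dissipative contribution in this comparison. The delicate point is to make the trapping uniform as $u\to 0^+$ and to exclude the orbit escaping above the nullcline while still tending to $0$, which again reduces to the same exponent inequality.

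Finally, the borderline case \eqref{CO2} is the most delicate: there $m=\beta+2$ and all three terms scale like $u^{\beta+1}$, so constants must be retained and one looks for a genuine power-law profile $W=K u^{\beta+2}$. Substituting into the ODE yields an algebraic equation for $K$ of the form $cK^{(\alpha+1)/(l+2)}=\frac{(l+1)(\beta+2)}{l+2}K+d$, and the existence of a positive monotone profile is equivalent to $c$ exceeding a threshold obtained by optimizing in $K$. That threshold is precisely
\[
c_0=(\beta+2)\Big(\tfrac{(\beta+2)(l+1)}{d(\beta+1)(l+2)}\Big)^{\frac{\beta+1}{\beta+2}},
\]
so that $c<c_0$ rules out any monotone profile and forces oscillation, completing the contradiction in both cases.
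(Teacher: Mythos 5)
Your strategy is genuinely different from the paper's. The paper never isolates a monotone regime: it works in the polar coordinates \eqref{c1}, derives the angle equation \eqref{moT}, and combines it with the lower bound \eqref{22} of Theorem \ref{Th2} (which forces $r^{2(\alpha-l)/(l+2)}(t)\geq \eta/t$) to show that a suitable primitive $H(\theta(t))$ would have to decrease to $-\infty$ while staying nonnegative; a separate sign argument then places a zero of $u$ between consecutive zeros of $u'$. Your reduction to the regime $u>0$, $u'<0$, $u,u'\to 0$ (via Proposition \ref{Pro}, the Remark after Theorem \ref{Th}, and Rolle for the $u'$ statement) is correct, and your phase-plane equation $\frac{l+1}{l+2}\frac{dW}{du}=cW^{\frac{\alpha+1}{l+2}}-du^{\beta+1}$, $W=|u'|^{l+2}$, is derived correctly. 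Moreover, in case \eqref{CO1} your sketch is completable, and cleanly so: (a) dropping the negative term and integrating from $u=0$ (where $W(0^+)=0$) gives $W(u)\leq Cu^{\frac{l+2}{l+1-\alpha}}$ when $\alpha<l+1$, while an Osgood-type argument excludes $W>0$ altogether when $\alpha\geq l+1$; (b) your nullcline trapping (below the nullcline $W$ increases as $u$ decreases, and once the orbit touches or lies above the nullcline it cannot re-cross it as $u$ decreases, since there $dW/du=0<dW_N/du$) gives $W(u)\geq (d/c)^{\frac{l+2}{\alpha+1}}u^{m}$ for all small $u$; (c) \eqref{CO1} is equivalent to $m<\frac{l+2}{l+1-\alpha}$, so (a) and (b) clash as $u\to 0^+$. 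This route even dispenses with the lower bound \eqref{22}, which the paper's proof needs; what it loses is the uniform treatment of both cases that the polar-coordinate computation provides.

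The critical case \eqref{CO2} is where your proposal has genuine gaps. First, a logical one: non-existence of an exact power profile $W=Ku^{\beta+2}$ does not exclude monotone orbits whose $W(u)$ is not an exact power. This can be repaired: set $K(s)=W(u)/u^{\beta+2}$, $s=\ln u$; in the critical case $K$ solves the autonomous equation $\frac{dK}{ds}=\frac{l+2}{l+1}\bigl(cK^{\frac{\beta+1}{\beta+2}}-\frac{(l+1)(\beta+2)}{l+2}K-d\bigr)$, whose right-hand side is $\leq -\delta<0$ uniformly in $K\geq 0$ when $c$ is below the tangency threshold, so $K(s)\to+\infty$ as $s\to-\infty$; this contradicts $K\leq C$, which follows from (a) above because $\frac{l+2}{l+1-\alpha}=\beta+2$ at criticality. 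Second, and more seriously, you assert without computation that the tangency threshold for $cK^{\frac{\beta+1}{\beta+2}}=\frac{(l+1)(\beta+2)}{l+2}K+d$ ``is precisely $c_0$''. Carrying out the optimization gives $(\beta+2)\,d^{\frac{1}{\beta+2}}\bigl(\frac{(\beta+2)(l+1)}{(\beta+1)(l+2)}\bigr)^{\frac{\beta+1}{\beta+2}}$, which equals $d\,c_0$, not $c_0$; the two coincide only when $d=1$. So as written your argument does not prove the theorem as stated for general $d$. There is something real behind this mismatch: your constant is the one consistent with the scaling $u\mapsto\lambda u(\mu t)$ of equation \eqref{1}, and the paper's coefficient $B$ in \eqref{moT1} is missing a factor $d$ relative to the identical quantity $a$ appearing in \eqref{E1}--\eqref{E2}, so the discrepancy points to a slip in the paper's $c_0$ rather than in your tangency equation. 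But your proposal neither performs the computation nor notices the conflict, and a complete proof of the stated theorem must engage with it rather than assert the identification.
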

\begin{proof} We proceed in 2 steps. \\

Step 1. For any $T>0$,  $u'(t)$ has at least a zero on $[T, \infty)$. Indeed assuming that $u'(t)$ has a constant sign on $[T, \infty)$, then  $u(t)$ has also a constant sign, opposite to that of $u'$.

 We define a polar coordinate system  by
\begin{equation}\label{c1}
\Big(\Frac{d(l+2)}{(\beta+2)(l+1)}\Big)^{\frac{1}{2}}|u|^{\frac{\beta}{2}}u =
r(t)\cos \theta(t),\,\,\,\,\,\,\,\,\, |u'|^{\frac{l}{2}}u' = r(t)\sin \theta(t),
\end{equation}
where $r$ and $\theta$ are two $C^1$ functions and
$r(t)=\Big(\Frac{l+2}{l+1}E(t)\Big)^{\frac{1}{2}}> 0$, we
have
$$
r'=-c\Frac{l+2}{2(l+1)}r^{\frac{2(\alpha+2)}{l+2}-1}
|\sin \theta|^{\frac{2(\alpha+2)}{l+2}}.
$$
From (\ref{c1}), we have
\begin{equation*}\label{c3}
|u'|^l u' =r^{\frac{2(l+1)}{l+2}}\sin \theta|\sin \theta|^{\frac{l}{l+2}},
\end{equation*}
and
{\small\begin{equation*}\label{c2}
 \begin{split}
\Big(|u'|^{l}u'\Big)'= -c
r^{\frac{2(\alpha+1)}{l+2}}\sin
\theta|\sin\theta|^{\frac{2(\alpha+2)}{l+2}+\frac{l}{l+2}}+
\frac{2(l+1)}{l+2}r^{\frac{2(l+1)}{l+2}}\theta' \cos \theta|\sin
\theta|^{\frac{l}{l+2}}.
\end{split}
\end{equation*}}
From (\ref{c1}), we have
\begin{equation*}\label{c3}
|u|^\beta u = \Big(\Frac{(\beta+2)(l+1)}{d(l+2)}\Big)^{\frac{\beta+1}{\beta+2}}
r^{\frac{2(\beta+1)}{\beta+2}}\cos \theta|\cos \theta|^{\frac{\beta}{\beta+2}},
\end{equation*}
and
\begin{equation*}\label{c4}
c|u'|^\alpha u' = c r^{\frac{2(\alpha+1)}{l+2}}\sin \theta
|\sin \theta|^{\frac{2\alpha-l}{l+2}}.
\end{equation*}
After some elementary manipulations including a division by $\cos\theta$, we find that $\theta$ satisfies, at any non-singular point,  the differential equation
\begin{equation}\label{moT}\theta'+Ar^{\frac{2(\alpha-l)}{l+2}}\sin
\theta \cos \theta|\sin \theta|^{\frac{2(\alpha-l)}{l+2}}
+ B r^{\frac{2(\beta+1)}{\beta+2}-{\frac{2(l+1)}{l+2}}}|\cos
\theta|^{\frac{\beta}{\beta+2}}|\sin
\theta|^{\frac{-l}{l+2}} = 0 \end{equation}
with \begin{equation}\label{moT1}
A= c\frac{l+2}{2(l+1)}
;\quad B = \frac{l+2}{2(l+1)}\Big(\Frac{(\beta+2)(l+1)}{d(l+2)}\Big)^{\frac{\beta+1}{\beta+2}}
\end{equation}
On the other hand, we know that $r(t)$ tends to $0$ exactly
like $t^{\frac{-(l+2)}{2(\alpha-l)}}$
as $t$ tends to infinity and we suppose that $\alpha>l$.\\
In the case $\alpha > \Frac{\beta(l+1)+l}{\beta+2}$,\,\,\,\,\,
for $t$ large {\small\begin{equation}
 \begin{split}
\theta' \leq
-\eta t^{-\gamma} |\cos
\theta|^{\frac{\beta}{\beta+2}} |\sin
 \theta|^{\frac{-l}{l+2}},
\end{split}
\end{equation}}
where $\eta > 0$ and $\gamma =
\frac{l+2}{2(\alpha-l)}\Big(\frac{2(\beta+1)}{\beta+2}-\frac{2(l+1)}{l+2}\Big)<
\Frac{\beta-l}{\beta-l}=
1.$\\
In the case $\alpha =\Frac{\beta(l+1)+l}{\beta+2}$;\,\,\, $c
<c_0=
(\beta+2)\Big(\frac{(\beta+2)(l+1)}{d(\beta+1)(l+2)}\Big)^{\frac{\beta+1}{\beta+2}}$,
we obtain {\small\begin{equation*}
 \begin{split}
&\theta'=-c\frac{l+2}{2(l+1)}r^{\frac{2(\alpha-l)}{l+2}}\sin \theta
\cos \theta|\sin \theta|^{\frac{2(\alpha-l)}{l+2}} \\&-\frac{l+2}{2(l+1)}\Big(\Frac{(\beta+2)(l+1)}{d(l+2)}\Big)^{\frac{\beta+1}{\beta+2}}
r^{\frac{2(\beta+1)}{\beta+2}-{\frac{2(l+1)}{l+2}}}|\cos
\theta|^{\frac{\beta}{\beta+2}}|\sin
\theta|^{\frac{-l}{l+2}}
\\&\leq-
\frac{l+2}{2(l+1)}r^{\frac{2(\alpha-l)}{l+2}} |\cos
\theta|^{\frac{\beta}{\beta+2}}|\sin\theta|^{\frac{-l}{l+2}}\left\{c|\cos
\theta|^{-\frac{\beta}{\beta+2}}\sin \theta \cos \theta|\sin
\theta|^{\frac{2\alpha-l}{l+2}}+
\Big(\Frac{(\beta+2)(l+1)}{d(l+2)}\Big)^{\frac{\beta+1}{\beta+2}}\right\}.
\end{split}
\end{equation*}}
Since
$$
\max_{\theta \in R}(|\sin \theta|^{\frac{2(\alpha +1)}{l+2}}|\cos \theta|^{\frac{2}{\beta+2}})
= \Big(\frac{1}{\beta+2}\Big)^{\frac{1}{\beta+2}}\Big(\frac{\beta+1}{\beta+2}\Big)
^{\frac{\beta+1}{\beta+2}},
$$
the coefficient of $r^{\frac{2(\alpha-l)}{l+2}}$ is a positive
constant as soon as
$$
\Big(\frac{(\beta+2)(l+1)}{d(l+2)}\Big)^{\frac{\beta+1}{\beta+2}} -c
\Big(\frac{1}{\beta+2}\Big)^{\frac{1}{\beta+2}}\Big(\frac{\beta+1}{\beta+2}\Big)
^{\frac{\beta+1}{\beta+2}}>0,
$$
Therefore if $c < c_0$, then for $t$ large
{\small\begin{equation}
 \begin{split}\label{c5}
\theta' &\leq -\eta t^{-1}|\cos
\theta|^{\frac{\beta}{\beta+2}}|\sin \theta|^{\frac{-l}{l+2}},
 \end{split}
 \end{equation}} an inequality which is a fortiori satisfied when $\alpha > \Frac{\beta(l+1)+l}{\beta+2}$.
We introduce the function {\small\begin{equation*}
 \begin{split}
H(s) &= \Int_a^s \Frac{dv}{|\cos
v|^{\frac{\beta}{\beta+2}}|\sin v|^{\frac{-l}{l+2}}}\\&=
\Int_a^s \Frac{|\sin v|^{\frac{l}{l+2}}}{|\cos
v|^{\frac{\beta}{\beta+2}}}\,dv.
 \end{split}
 \end{equation*}}
Since u does not vanish for $t\geq T$, we may assume for instance
$$
\forall t\geq t_0,\,\,\,\,\,\, \theta(t)\in \Big(-\frac{\pi}{2},\frac{\pi}{2}\Big).
$$
Then, $H(\theta(t))=F(t)$ is differentiable for $t\geq t_0$,
using (\ref{c5}), we find
 {\small\begin{equation*}
 \begin{split}
 \forall t \geq t_0,\,\,\, F'(t) &\leq -\eta t^{-1}.
 \end{split}
\end{equation*}}
If we choose  $a=-\frac{\pi}{2}$ then $H(\theta(t))$ is non-negative,\,\,\, $H(\theta(t))$ is non-increasing. However the above inequality shows that $H(\theta(t))$ tends to $-\infty$ for $t $ large. This contradiction proves that $u'$ has a zero on each half-line. \\

Step 2. Applying Step 1 , we know that $u'$ has an infinite sequence of zeroes tending to infinity. We claim that between two successive zeroes of $u'$ there is a zero of $u$. Indeed let $u'(a)= u'(b) = 0$ with $a<b$ and $u' \not=0$ in $(a, b)$. If $u$ has a constant sign in $(a, b)$, by the equation $(|u'(t)|^{l}u'(t))' $ has the same sign for $t = a$ and $t= b$, which implies that $(|u'(t)|^{l}u'(t))' $ have opposite signs on $(a, a+\eta)$ and
$(b-\eta, b)$ for $\eta>0$ small enough, a contradiction with $u' \not=0$ in $(a, b)$. The proof of Theorem \ref{Th1} is now completed.\end{proof}
Our second result shows that when $\alpha$ is small, $u$ has a finite number of zeroes on each half-line.

\begin{theorem}\label{ml}
 Assume
 $$
 \alpha < \Frac{\beta(l+1)+l}{\beta+2}
 $$
 Then any solution $u(t)$ of (\ref{1}) which is not identically
 $0$ has a finite number of zeros on $(0,\infty)$. Moreover, for t
 large, $ u'(t)$ has the opposite sign to that of $u(t)$ and
 $u''(t) $ has the same sign as $u(t)$.
\end{theorem}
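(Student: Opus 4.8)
The plan is to reduce everything to a single assertion --- that $u$ has only finitely many zeros --- and then to extract the sign statements by a short phase-plane argument on the half-line where $u$ keeps a fixed sign. Since \eqref{1} is odd in $u$ and, by Corollary \ref{finite}, the zeros of any nontrivial solution are isolated, proving finiteness of the zero set is the same as showing that $u$ is eventually of one sign, which by oddness I may take to be positive: $u(t)>0$ for $t\ge T_0$.

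Grant this for the moment and work on $[T_0,\infty)$. At every critical point of $u$ there one has $u'=0$, hence $(|u'|^lu')'=-d\,u^{\beta+1}<0$, so $|u'|^lu'$ is strictly decreasing through $0$ and the point is a local maximum; as two maxima would enclose an interior minimum (another critical point, impossible), $u$ has at most one critical point and $u'$ keeps a constant sign for large $t$. Because $E(t)\to0$ by Theorem \ref{Th2}, we have $u\to0$, so that sign is negative: $u>0$, $u'<0$, and $u\in\mathcal{C}^2$ eventually. Writing $g:=c|u'|^{\alpha+1}-d\,u^{\beta+1}$, so that $(l+1)|u'|^lu''=(|u'|^lu')'=g$ and $\sgn u''=\sgn g$, I note that if $g\le0$ held for all large $t$ then $u'$ would be nonincreasing and hence bounded away from $0$, forcing $u\to-\infty$; thus $g>0$ at arbitrarily late times, while at any zero of $g$ we have $u''=0$ and therefore $g'=-d(\beta+1)u^{\beta}u'>0$. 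Hence $g$ can cross $0$ only upwards and stays positive once positive, giving $u''>0$ eventually --- the asserted signs.

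The crux is the finiteness of the zero set, and here I would argue by contradiction using the polar coordinates \eqref{c1}: zeros of $u$ occur exactly where $\theta\in\frac{\pi}{2}+\pi\mathbb{Z}$, zeros of $u'$ where $\theta\in\pi\mathbb{Z}$, and off the singular set $\theta$ solves \eqref{moT} with $A,B>0$ given by \eqref{moT1}. Denote by $\gamma_1=\frac{2(\alpha-l)}{l+2}$ and $\gamma_2=\frac{2(\beta+1)}{\beta+2}-\frac{2(l+1)}{l+2}$ the two powers of $r$ in \eqref{moT}; a direct computation identifies the hypothesis $\alpha<\frac{\beta(l+1)+l}{\beta+2}$ with the inequality $\gamma_2>\gamma_1$. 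Fixing $\delta\in(0,\frac{\pi}{2})$ and evaluating \eqref{moT} at the non-singular value $\theta=-\frac{\pi}{2}+\delta$ yields
\begin{equation*}
\theta'=r^{\gamma_1}\Big[A\sin\delta\,(\cos\delta)^{1+\gamma_1}-B\,r^{\gamma_2-\gamma_1}(\sin\delta)^{\frac{\beta}{\beta+2}}(\cos\delta)^{-\frac{l}{l+2}}\Big],
\end{equation*}
and since $E(t)\to0$ forces $r(t)\to0$ while $\gamma_2-\gamma_1>0$, the bracket is positive for $t$ large, so $\theta'>0$ there; the symmetric evaluation at $\theta=\frac{\pi}{2}-\delta$ gives $\theta'<0$. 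By the $\pi$-periodicity of \eqref{moT} in $\theta$ these inequalities persist at all translates by $\pi$, so beyond some $T$ the curve $\theta(t)$ can cross no barrier $-\frac{\pi}{2}+\delta+k\pi$ downwards and no barrier $\frac{\pi}{2}-\delta+k\pi$ upwards. Choosing $T$ large and equal to an instant where $u'(T)=0$ --- such instants occur arbitrarily late, by Rolle between consecutive zeros of $u$ --- places $\theta(T)\in\pi\mathbb{Z}$ at the centre of a sector, whence $\theta$ is trapped in an interval of length $\pi-2\delta$ on which $\cos\theta$ has one sign. Then $u$ has no zero past $T$, contradicting the assumed oscillation.

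The step I expect to require the most care is the interaction of this barrier argument with the singular set $\{u'=0\}$, where \eqref{moT} degenerates through the factor $|\sin\theta|^{-l/(l+2)}$. The remedy is to place every barrier at a value with $\sin\theta\ne0$, so that $\theta$ is $\mathcal{C}^1$ near any barrier-crossing time and the elementary first-crossing argument applies verbatim, the singular instants $\theta\in\pi\mathbb{Z}$ lying strictly inside the sectors and being finite on compacts by Corollary \ref{f^2}; notably only the qualitative fact $r(t)\to0$ is used, not the sharp decay rate. Beyond that, the remaining work is bookkeeping: verifying the equivalence $\gamma_2>\gamma_1\Leftrightarrow\alpha<\frac{\beta(l+1)+l}{\beta+2}$ and checking the sign of the bracket at each barrier.
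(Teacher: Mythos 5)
Your proposal is correct, but it reaches the core conclusion by a genuinely different route than the paper. Both arguments live in the polar coordinates \eqref{c1} and use equation \eqref{moT}, and your verification that the hypothesis $\alpha<\frac{\beta(l+1)+l}{\beta+2}$ is exactly $\gamma_2>\gamma_1$ is the same arithmetic the paper exploits; but where you diverge is the mechanism. The paper multiplies \eqref{moT} by $|\sin\theta|^{\frac{2\alpha+l}{l+2}}\sin\theta\cos\theta$ to produce a periodic function $G(\theta)$ whose derivative along the flow is bounded above by $Ct^{-\gamma}$ with $\gamma>1$; this step consumes the \emph{quantitative} decay estimate of Theorem \ref{Th2}(ii) (the rate $r(t)\lesssim t^{-\frac{(\alpha+1)(\beta+2)}{2(\beta-\alpha)}}$), and then Lemma \ref{L1} yields convergence $\theta(t)\to\Theta$, from which finiteness of zeros, the sign of $u'$, and (by differentiating \eqref{1}) the sign of $u''$ are extracted. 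Your barrier/invariant-sector argument instead uses only the qualitative fact $r(t)\to 0$: the two terms of \eqref{moT} have fixed signs at $\theta=\pm(\frac{\pi}{2}-\delta)+k\pi$, the $A$-term dominates at the lower barriers once $r$ is small (here $\gamma_2>\gamma_1$ enters), and trapping $\theta$ in a sector after a late zero of $u'$ contradicts oscillation; your handling of the singular set (placing barriers where $\sin\theta\cos\theta\neq 0$ and invoking only continuity of $\theta$ plus a first-exit-time argument) is exactly the right fix, since \eqref{moT} is only valid off the singularities. Your sign analysis is also different and self-contained: critical points of $u$ on a half-line where $u>0$ are strict maxima (so $u'$ is eventually of one sign), and the auxiliary function $g=c|u'|^{\alpha+1}-du^{\beta+1}$ can cross zero only upwards, giving $u''>0$; the paper gets the first fact from $\theta\to\Theta$ and the second from a differential inequality for $u'''$. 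What each approach buys: yours is more elementary and robust (no Lemma \ref{L1}, no sharp decay rate, no verification that $\gamma>1$), while the paper's yields the stronger conclusion that $\theta(t)$ converges, which is not a luxury --- the proof of Theorem \ref{Non} in Section 5 opens by citing precisely that consequence of Lemma \ref{L1}, so within the paper's architecture the stronger statement is needed downstream, whereas your theorem-specific argument would leave it still to be proved.
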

 \begin{proof}
 We introduce
$$
G(s) = \Int_0^s|\sin v|^{\frac{2\alpha+l}{l+2}}\sin v\cos v\,dv.
$$ First we observe that $G\circ\theta$ is $C^1$ on any interval where $u'$ does not vanish. Indeed on such an interval, $\theta$ is $C^1$ and multiplying (\ref{moT}) by $|\sin
\theta|^{\frac{2\alpha+l}{l+2}}\sin \theta \cos \theta$, we obtain
{\small\begin{equation*}
\begin{split}
[G(\theta(t))]'&=-Ar^{\frac{2(\alpha-l)}{l+2}}\sin^2
\theta \cos^2 \theta|\sin \theta|^{\frac{4\alpha-l}{l+2}}
\\&-B
r^{\frac{2(\beta+1)}{\beta+2}-{\frac{2(l+1)}{l+2}}}|\cos
\theta|^{\frac{\beta}{\beta+2}}|\sin
\theta|^{\frac{2\alpha}{l+2}}\sin \theta \cos \theta,
\end{split}
\end{equation*}} Then we observe that when $\sin\theta$ vanishes, the RHS of the above equality is 0. Actually it is also continuous at points where $\sin\theta$ vanishes, so that finally $G\circ\theta$ is $C^1$ everywhere. Now
using Cauchy-Schwarz inequality  we obtain {\small\begin{equation*}
\begin{split}
&-B
r^{\frac{2(\beta+1)}{\beta+2}-{\frac{2(l+1)}{l+2}}}|\cos
\theta|^{\frac{\beta}{\beta+2}}|\sin
\theta|^{\frac{2\alpha}{l+2}}\sin \theta \cos \theta\\&\leq B
r^{\frac{4(\beta+1)}{\beta+2}-{\frac{4(l+1)}{l+2}}}|\cos
\theta|^{\frac{\beta}{\beta+2}}|\sin
\theta|^{\frac{4\alpha}{l+2}}\sin^2 \theta \cos^2 \theta .
\end{split}
\end{equation*}}
Then
 {\small\begin{equation*}
\begin{split}
[G(\theta(t))]' & \leq
C(l,c,\beta,d)r^{\frac{2(\alpha-l)}{l+2}}\cos^2\theta \sin^2\theta
|\sin\theta|^{\frac{4\alpha-l}{l+2}}
\Big(1+r^{\frac{4(\beta+1)}{\beta+2}-\frac{4(l+1)}{l+2}-\frac{2(\alpha-l)}{l+2}}
|\sin \theta|^{\frac{l}{l+2}}|\cos\theta|^{\frac{\beta}{\beta+2}}\Big)\\&\leq
C(l,c,\beta,d)E(t)^{\frac{\alpha-l}{l+2}}\Big(r^{\frac{2(\alpha-l)}{l+2}}r^{-\frac{2(\alpha-l)}{l+2}}+
r^{\frac{4(\beta-l)}{(\beta+2)(l+2)}
-\frac{2(\alpha-l)}{l+2}}\Big)\\&\leq
C(l,c,\beta,d)E(0)^{\frac{\alpha-l}{l+2}}
r^{\frac{4(\beta-l)}{(\beta+2)(l+2)} -\frac{2(\alpha-l)}{l+2}}\leq
C'(\beta,c,l,d)t^{-\gamma},
\end{split}
\end{equation*}}
where $r(t)$ tends to exactly like
$t^{-\frac{(\alpha+1)(\beta+2)}{2(\beta-\alpha)}}$ and
{\small\begin{equation*}
\begin{split}
\gamma &=\Big(\frac{4(\beta-l)}{(\beta+2)(l+2)}
-\frac{2(\alpha-l)}{l+2}\Big)\frac{(\alpha+1)(\beta+2)}{2(\beta-\alpha)}
\\&=\frac{2(\beta-l)(\alpha+1)}{(l+2)(\beta-\alpha)}-
\frac{2(\alpha+1)(\alpha-l)}{(l+2)(\beta-\alpha)}-\frac{\beta(\alpha-l)
(\alpha+1)}{(l+2)(\beta-\alpha)}\\&=\frac{2(\alpha+1)}{l+2}
-\frac{\beta(\alpha-l)
(\alpha+1)}{(l+2)(\beta-\alpha)}\\&=\frac{2}{l+2}+
\frac{1}{l+2}\Big(2\alpha-
\frac{\beta(\alpha-l)\alpha+\beta(\alpha-l)-\alpha l+\alpha
l}{\beta-\alpha}\Big)\\&=\frac{2}{l+2}+
\frac{2\alpha(\beta-\alpha)-\beta(\alpha-l)\alpha-\beta\alpha+\alpha
l}{(\beta-\alpha)(l+2)}+\frac{\beta l-\alpha
l}{(\beta-\alpha)(l+2)}\\&=1+\alpha\Big[\frac{\beta(l+1)+l-\alpha(\beta+2)}
{(\beta-\alpha)(l+2)}\Big]>1,
\end{split}
\end{equation*}}
To finish the proof we shall use the following Lemma which is a straightforward extension of a lemma from {\cite{har2}} .
\begin{lemma}\label{L1}
Let $\theta \in C^0(a,+\infty)$ and $G$ be a non
constant T-periodic function. Assume that $G\circ\theta\in C^1(a,+\infty)$ and for some $h\in
L^1(a,+\infty)$
$$
\forall t\geq t_0,\,\,\,\,\, [G(\theta(t))]'\leq h(t).
$$
Then, for $t\geq t_1$ large enough, $\theta(t)$ remains in some
interval of length $\leq T$. If, in  addition, $G'$ has finite
number of zeroes on $[0,T]$. Then $\theta(t)$ has a limit for
$t\rightarrow \infty$.
\end{lemma}
\noindent {\bf{End of proof of Theorem \ref{ml}.}} By Lemma \ref{L1}, we know that there exists $\Theta $  such that $  \theta(t)\rightarrow \Theta $ as $t\mapsto +\infty. $
  If $\Theta\neq \frac{\pi}{2}$, then clearly $u$ has a constant sign for t large. If
 $\Theta=\frac{\pi}{2}$,\,\, $|u'|^{\frac{l}{2}+1}\sim r(t)$ and
$|u'|$ does not vanish for $t\geq A$. Then, $u$ can have at
most one zero $b$ in $(A,+\infty)$. In this case it has a constant
sign on $(b+1,+\infty)$. Next, let $t_0$ be such that $u$ has a
constant sign on $(t_0,+\infty)$. If $u'$ has several zeroes in
$(t_0,+\infty)$,  then  $(|u'|^lu')'$ must have different signs at
two successive  zeroes of $|u'|^lu'$, and by the equation the
corresponding values of $u$ must have different signs too, a
contradiction which shows that $u'$ has at most one zero in
$(t_0,+\infty)$ and therefore has a constant sign for t large. 
Since $ u(t)\rightarrow 0$ as $ t\mapsto +\infty,$ at infinity the signs of $u(t)$ and $u'(t)$ must
be opposite to each other. Finally it is easy to check that $u''(t)$ has the same sign as $u(t)$. Indeed, since $u'(t)$ tends to $0$ at infinity and keeps a constant sign, first of all $u''(t)$ cannot have the same sign as $u'(t)$ on any halfline $[T, \infty)$. Assuming for instance $u'>0$ on $[T, \infty)$ and $u''(a)<0$ with $a\ge T$, differentiating (\ref{1})yields
$$ (l+1) u''' = - \vert u'\vert ^{-l} (u''^2\vert u'\vert^{-1}  +(\alpha + 1)\vert u'\vert ^\alpha u'' + (\beta +1)\vert u\vert ^\beta u') \le-  (\alpha + 1)\vert u'\vert ^{\alpha-l} u''  $$ which clearly implies $u''< 0$ on $[a, \infty)$. The case $u'<0$ on $[T, \infty)$ follows by changing $u$ to $(-u)$.
This concludes the proof of Theorem  \ref{ml}\end{proof} We conclude this section by a result in the critical case
\begin{theorem}
Assume that
\begin{equation}
\alpha = \Frac{\beta(l+1)+l}{\beta+2};\,\,\,\,
c\geq c_0=(\beta+2)\Big(\frac{(\beta+2)(l+1)}{d(\beta+1)(l+2)}\Big)^{\frac{\beta+1}{\beta+2}}.
\end{equation}
Then any solution $u(t)$ of (\ref{1}) which is not identically
 $0$ has at most one zero on $(0,\infty)$.
\end{theorem}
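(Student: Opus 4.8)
The plan is to analyze the solution in the polar coordinates $(r,\theta)$ of \eqref{c1}, in which a zero of $u$ is the same as $\cos\theta=0$ (that is, $\theta\equiv\pi/2\pmod\pi$) and a zero of $u'$ the same as $\sin\theta=0$. Since \eqref{1} is odd, it is enough to prove that whenever $u(t_1)=0$ the solution has no further zero of $u$ for $t>t_1$: two zeros $t_1<t_2$ would then contradict this statement applied at $t_1$. At such a $t_1$ one has $u'(t_1)\neq0$ (otherwise $u\equiv0$ by Proposition \ref{Pro}), and after replacing $u$ by $-u$ we may assume $u'(t_1)>0$, so that $r(t_1)>0$, $\theta(t_1)=\pi/2$ and, $u$ being increasing through $0$, $\theta<\pi/2$ for $t>t_1$ close to $t_1$.

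First I would specialize \eqref{moT} to the critical exponent. A short computation shows
$$\frac{2(\alpha-l)}{l+2}=\frac{2(\beta+1)}{\beta+2}-\frac{2(l+1)}{l+2}=:p=\frac{2(\beta-l)}{(\beta+2)(l+2)}>0,$$
so the two powers of $r$ in \eqref{moT} coincide. Factoring out $r^{p}$ and using \eqref{moT1}, equation \eqref{moT} becomes, at every point where $u'\neq0$,
$$\theta'=-\frac{l+2}{2(l+1)}\,r^{p}\,|\cos\theta|^{\frac{\beta}{\beta+2}}\,|\sin\theta|^{-\frac{l}{l+2}}\,g(\theta),$$
where $g(\theta):=K+c\,\sgn(\sin\theta\cos\theta)\,|\sin\theta|^{\frac{2(\beta+1)}{\beta+2}}|\cos\theta|^{\frac{2}{\beta+2}}$ and $K=\big(\frac{(\beta+2)(l+1)}{d(l+2)}\big)^{\frac{\beta+1}{\beta+2}}$. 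The sign of $\theta'$ is thus entirely governed by $-g(\theta)$, the prefactor being nonnegative. On $(0,\pi/2)$ one has $\sin\theta\cos\theta>0$, so $g>0$ there for any $c$; on $(-\pi/2,0)$ one has $g=K-c\,|\sin\theta|^{\frac{2(\beta+1)}{\beta+2}}|\cos\theta|^{\frac{2}{\beta+2}}$. Using the maximum $\max_\theta|\sin\theta|^{\frac{2(\beta+1)}{\beta+2}}|\cos\theta|^{\frac{2}{\beta+2}}=\big(\frac{\beta+1}{\beta+2}\big)^{\frac{\beta+1}{\beta+2}}\big(\frac1{\beta+2}\big)^{\frac1{\beta+2}}$ computed in the proof of Theorem \ref{Th1}, together with the fact that $c_0$ is exactly the value for which $c_0$ times this maximum equals $K$, I would check that for $c\geq c_0$ the function $g$ is $\geq0$ on $(-\pi/2,0)$, vanishing only at a point $\theta_\infty\in(-\pi/2,0)$ (the reflected maximizer $\theta^\ast-\pi$ when $c=c_0$, or the larger root of $g$ when $c>c_0$). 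Hence $g>0$ on $(\theta_\infty,\pi/2)$ with $\theta_\infty>-\pi/2$.

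I would then show that, starting from $\theta(t_1)=\pi/2$, the angle $\theta$ decreases and remains in $(\theta_\infty,\pi/2]$ for all $t\ge t_1$, so that $\cos\theta(t)>0$ and $u(t)>0$ for $t>t_1$, which is the claim. Since $g>0$ on $(\theta_\infty,\pi/2)$ and the prefactor is positive there except that it blows up at the zero $\theta=0$ of $u'$ (which is therefore crossed in finite time) and vanishes at $\pi/2$, the function $\theta$ is strictly decreasing while it stays in this interval. It remains only to exclude that $\theta$ crosses the value $\theta_\infty$. When $c>c_0$ this is immediate: just below $\theta_\infty$ one has $g<0$, hence $\theta'>0$, so $\theta_\infty$ is a barrier that a decreasing trajectory cannot pass.

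The main obstacle is the critical subcase $c=c_0$, where $\theta_\infty=\theta^\ast-\pi$ is a double (touching) zero of $g$ with $g>0$ on both sides, so the barrier argument fails and one must rule out that $\theta$ reaches $\theta_\infty$ in finite time. This is exactly where the critical value of $\alpha$ enters, through the sharp decay rate: by Theorem \ref{Th2} (the lower bound \eqref{22} together with case (i)) one has $r(t)\asymp t^{-\frac{l+2}{2(\alpha-l)}}$, and at the critical exponent $p\cdot\frac{l+2}{2(\alpha-l)}=1$, so $r^{p}\asymp t^{-1}$ and $\int^{\infty}r^{p}\,dt=\infty$. Writing $y=\theta-\theta_\infty$ and using $g(\theta)\asymp(\theta-\theta_\infty)^2$ near the double zero, the $\theta$-equation reduces to $y'\asymp-t^{-1}y^{2}$, whose positive solutions tend to $0$ only as $t\to\infty$ (like $1/\log t$); thus $\theta(t)>\theta_\infty>-\pi/2$ for all finite $t$. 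In every case $u$ keeps the constant sign $u>0$ after $t_1$, so $u$ vanishes at most once on $(0,\infty)$, which is the assertion.
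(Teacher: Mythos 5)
Your proof is correct, and it lives in the same framework as the paper's: both pass to the polar coordinates \eqref{c1}, exploit the fact that at the critical exponent the two powers of $r$ in \eqref{moT} coincide (your identity $\frac{2(\alpha-l)}{l+2}=\frac{2(\beta+1)}{\beta+2}-\frac{2(l+1)}{l+2}$), so that the sign of $\theta'$ is governed by a function of $\theta$ alone (your $g$ is the paper's $K(\theta)$ up to a positive prefactor), and both then confine $\theta$ to one side of the relevant zero of that function. The genuine difference is in the crux, the touching subcase $c=c_0$: the paper disposes of it abstractly --- if the non-increasing $\theta$ reached a non-trivial zero of $K$ in finite time, Cauchy--Lipschitz uniqueness near that equilibrium (where the right-hand side of \eqref{moT} is smooth) would force $\theta$ to be constant for all time, so $u$ would never vanish --- whereas you quantify the degeneracy: $g(\theta)\asymp(\theta-\theta_\infty)^2$ at the double zero together with $r^p\lesssim t^{-1}$ from Theorem \ref{Th2}(i) gives $y'\gtrsim -t^{-1}y^2$, which cannot reach $0$ in finite time. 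Both are sound; the paper's is shorter and needs no decay estimate, yours is more elementary and self-contained. Note, however, that your emphasis on the sharp decay rate is misplaced: the comparison argument only needs $r(t)^p$ locally bounded in $t$ (any locally integrable coefficient keeps $1/y$ finite on finite intervals), so the criticality of $\alpha$ really enters through the equality of the $r$-exponents --- which you and the paper both use --- not through the $t^{-1}$ rate. One slip to reword: your claim that for $c\geq c_0$ one has $g\geq 0$ on $(-\pi/2,0)$ is false when $c>c_0$ (there $g<0$ strictly between its two roots); what you actually use later --- $g>0$ on $(\theta_\infty,\pi/2)$ and $g<0$ just below the larger root $\theta_\infty$ --- is correct, so only the statement, not the argument, needs fixing. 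Finally, like the paper, you should say explicitly that at the isolated zeros of $u'$ (Corollary \ref{f^2}), where \eqref{moT} fails, monotonicity of $\theta$ is obtained on each side and glued by continuity.
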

\begin{proof}  In this case
 {\small\begin{equation*}
 \begin{split}
\theta'=-\frac{l+2}{2(l+1)}r^{\frac{2(\alpha-l)}{l+2}} |\sin
\theta|^{\frac{-l}{l+2}}\left\{\Big(\Frac{(\beta+2)(l+1)}{d(l+2)}\Big)^{\frac{\beta+1}{\beta+2}}
|\cos \theta|^{\frac{\beta}{\beta+2}}+c\sin \theta \cos
\theta|\sin \theta|^{\frac{2\alpha-l}{l+2}}\right\}.
\end{split}
\end{equation*}}
Introducing $$K(\theta)=\frac{l+2}{2(l+1)}|\sin
\theta|^{\frac{-l}{l+2}}\left\{\Big(\Frac{(\beta+2)(l+1)}{d(l+2)}\Big)^{\frac{\beta+1}{\beta+2}}
|\cos \theta|^{\frac{\beta}{\beta+2}}+c\sin \theta \cos
\theta|\sin \theta|^{\frac{2\alpha-l}{l+2}}\right\},$$
If $c= c_0$ then $K(\theta)> 0$, so that $\theta$ is non-increasing. Due to periodicity, the distance of two zeroes of $K(\theta)$ other than $\frac{\pi}{2}$ is not more than $\pi$ and therefore either $\theta$ remains in an interval of length less than $\pi$ or it coincides with one of these zeroes for a finite value of t. In the first case $\theta$, being non-increasing and bounded, converges to a limit and achieves at most once a value for which u vanishes. In the second case, due to existence and uniqueness for the ODE satisfied by $\theta$ near the non-trivial equilibria, $\theta$  is  constant and actually u never vanishes.\\
 If $c> c_0$,\,\,  $K(\theta)> 0$, If $\theta \neq \frac{\pi}{2}$, then it remains bounded and,
 since $K(\theta)> 0$ near the trivial zeroes, $\theta$ is monotone, and hence convergent. If $\theta= \frac{\pi}{2}$, then $\theta'=0$ and again u never vanishes. Otherwise, as previously, u vanishes at most once.\end{proof}
\section{A Detailed study of the non-oscillatory case}
\begin{theorem}\label{Non}
Assuming $l < \alpha < \frac{\beta(l+1)+l}{\beta+2}$,
any solution u of (\ref{1}) satisfies the following
 alternative: either there is a positive
constant $C$ such that
\begin{equation}\label{A}
\forall t \geq 1,\,\,\,\, E(t)\leq C(E(0))t^{-\frac{l+2}{\alpha-l}},
\end{equation}
or we  have
\begin{equation}\label{B}
\limsup_{t\rightarrow \infty}t^{\frac{(\alpha+ 1)(\beta+2)}
{\beta-\alpha)}}E(t)>0.
\end{equation}
\end{theorem}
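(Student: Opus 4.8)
The plan is to read everything off the polar description introduced for Theorems~\ref{Th1} and \ref{ml}. Since $l<\alpha<\Frac{\beta(l+1)+l}{\beta+2}$ the solution is non-oscillatory, so by Theorem~\ref{ml} the angle $\theta(t)$ converges to a limit $\Theta$, and for $t$ large $u,u',u''$ keep constant signs with $u'$ opposite to $u$ and $u''$ of the same sign as $u$; replacing $u$ by $-u$ if necessary I may assume $u>0$, $u'<0$, $u''>0$ on some $[T,\infty)$ (in particular $u'$ never vanishes there, by Corollary~\ref{f^2}). I will use the fact that in the polar variables the kinetic and potential parts of the energy are exactly $E\sin^2\theta$ and $E\cos^2\theta$, and that $r=\big(\frac{l+2}{l+1}E\big)^{1/2}$ obeys $r'=-c\,\frac{l+2}{2(l+1)}\,r^{\frac{2(\alpha+2)}{l+2}-1}|\sin\theta|^{\frac{2(\alpha+2)}{l+2}}$. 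The whole argument then rests on the dichotomy $\sin\Theta\neq0$ versus $\sin\Theta=0$, which produces respectively \eqref{A} and \eqref{B}.

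First I would treat the case $\sin\Theta\neq0$. Then $|\sin\theta(t)|\geq\delta>0$ for $t$ large, and the $r$-equation gives $r'\leq -C_\delta\,r^{\,1+\frac{2(\alpha-l)}{l+2}}$; integrating the resulting inequality for $r^{-\frac{2(\alpha-l)}{l+2}}$ yields $r(t)\leq C\,t^{-\frac{l+2}{2(\alpha-l)}}$, that is $E(t)\leq C\,t^{-\frac{l+2}{\alpha-l}}$, which is exactly alternative \eqref{A}. This is the computation already used for \eqref{22} and presents no difficulty.

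The substance of the proof is the case $\sin\Theta=0$ (equivalently $\theta\to0$, the ``slow manifold'' regime where the inertial term becomes negligible), in which I must produce the lower bound \eqref{B}. Writing the equation on $[T,\infty)$ as $(l+1)|u'|^{l}u''=c|u'|^{\alpha+1}-d\,u^{\beta+1}$ and using $u''>0$ gives at once $c|u'|^{\alpha+1}>d\,u^{\beta+1}$, but this is the \emph{wrong} direction; what \eqref{B} requires is the reverse bound $|u'|^{\alpha+1}\leq C\,u^{\beta+1}$, which the sign of $u''$ does not provide. To obtain it I would set $\psi=|u'|\,u^{-\frac{\beta+1}{\alpha+1}}$ and, using $\tau=-\ln u$ as independent variable, derive a scalar equation of the form $\Frac{d\psi}{d\tau}=\mu\psi-\Frac{c\psi^{\alpha+1}-d}{(l+1)\psi^{l+1}}\,e^{\lambda\tau}$, where $\mu=\frac{\beta+1}{\alpha+1}$ and $\lambda=\frac{\beta(l+1)+l-\alpha(\beta+2)}{\alpha+1}>0$. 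The exponentially growing coefficient makes the equilibrium $\psi_*=(d/c)^{1/(\alpha+1)}$ strongly attracting from above: on the line $\psi=\psi_*+\varepsilon$ one has $c\psi^{\alpha+1}-d>0$, hence $\Frac{d\psi}{d\tau}<0$ for $\tau$ large, so $\psi$ cannot escape upward and remains bounded. This yields $|u'|\leq C\,u^{\frac{\beta+1}{\alpha+1}}$ for $t$ large, which is the matching slow-manifold upper bound.

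Finally I would integrate this bound. From $u'\geq -C\,u^{\frac{\beta+1}{\alpha+1}}$ with $\frac{\beta+1}{\alpha+1}>1$ (note $\alpha<\beta$, since $l<\alpha$ forces $\alpha(\beta+2)<\alpha\beta+\beta+\alpha$), the quantity $u^{-\frac{\beta-\alpha}{\alpha+1}}$ grows at most linearly in $t$, whence $u(t)\geq c\,t^{-\frac{\alpha+1}{\beta-\alpha}}$; since $E\geq\frac{d}{\beta+2}u^{\beta+2}\geq c'\,t^{-\frac{(\alpha+1)(\beta+2)}{\beta-\alpha}}$, this even gives a positive $\liminf$, a fortiori \eqref{B}. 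The main obstacle is clearly the slow-manifold upper bound $|u'|\leq C\,u^{(\beta+1)/(\alpha+1)}$: it encodes a genuine rigidity (no intermediate decay rate between the two exponents is possible) and cannot follow from the monotonicity of $E$ or the sign of $u''$ alone, which is exactly why the barrier argument on $\psi$ is needed.
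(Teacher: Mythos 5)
Your reduction is correct as far as it goes: with $\tau=-\ln u$ and $\psi=|u'|u^{-\mu}$, $\mu=\frac{\beta+1}{\alpha+1}$, one does get $\frac{d\psi}{d\tau}=\mu\psi-\frac{c\psi^{\alpha+1}-d}{(l+1)\psi^{l+1}}\,e^{\lambda\tau}$ with $\lambda=\frac{\beta(l+1)+l-\alpha(\beta+2)}{\alpha+1}>0$; the case $\sin\Theta\neq 0$ gives \eqref{A} exactly as you say; and if $\psi$ were known to be bounded, your final integration would indeed give $u(t)\geq c\,t^{-\frac{\alpha+1}{\beta-\alpha}}$ and hence \eqref{B}. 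The genuine gap is the sentence ``so $\psi$ cannot escape upward and remains bounded.'' The line $\psi=\psi_*+\varepsilon$ becomes a barrier only after the finite time $\tau_\varepsilon$ at which the exponential term overtakes $\mu(\psi_*+\varepsilon)$; from then on it forbids upward crossings of that level, but it says nothing about a trajectory that is already above the level at time $\tau_\varepsilon$. Such a trajectory can stay ahead of every barrier forever: the level-$L$ barrier is effective only after $\tau_L=\frac{1}{\lambda}\ln\frac{\mu(l+1)L^{l+2}}{cL^{\alpha+1}-d}\sim\frac{l+1-\alpha}{\lambda}\ln L$, which tends to infinity with $L$ (recall $\alpha<l+1$), so a solution climbing along the moving nullcline $\mu\psi=\frac{c\psi^{\alpha+1}-d}{(l+1)\psi^{l+1}}e^{\lambda\tau}$, i.e.\ $\psi\sim Ce^{\lambda\tau/(l+1-\alpha)}$, is never caught. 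This loophole is real, not hypothetical: your argument uses only $u>0$, $u'<0$, $u''>0$ and the $\psi$-equation, and all of these hold as well for the \emph{fast} solutions, which exist by Theorem \ref{C}, satisfy \eqref{A}, and hence by Theorem \ref{mp} obey $u\sim c_1t^{-\frac{1+l-\alpha}{\alpha-l}}$, $|u'|\sim c_2t^{-\frac{1}{\alpha-l}}$, so that for them $\psi\sim Ct^{\lambda/(\alpha-l)}\rightarrow\infty$. A proof that never invokes the case hypothesis $\sin\Theta=0$ cannot conclude that $\psi$ is bounded, because the conclusion is false in that generality.

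The missing step can be supplied, but it is a genuine piece of work. Since $|\tan\theta|=\mathrm{const}\,|u'|^{\frac{l+2}{2}}u^{-\frac{\beta+2}{2}}=\mathrm{const}\,\psi^{\frac{l+2}{2}}e^{-\lambda\tau/2}$, the hypothesis $\theta\rightarrow 0$ gives $\psi=o\big(e^{\lambda\tau/(l+2)}\big)$; because $l+1-\alpha<l+2$, this growth is strictly slower than that of the upper nullcline branch $\sim e^{\lambda\tau/(l+1-\alpha)}$, so the trajectory eventually enters the region between the two nullcline branches where $\frac{d\psi}{d\tau}<0$, and only from that moment on is it trapped, whence bounded. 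With that supplement your scheme works and is actually stronger than the paper's argument: you would obtain a positive liminf, whereas the paper multiplies the polar equation \eqref{moT} by $|\sin\theta|^{l/(l+2)}$, integrates over dyadic intervals $(t,2t)$, and combines H\"older's inequality with a dyadic-decay lemma from \cite{har2}, obtaining \eqref{B} only as a limsup along a suitable unbounded set. As written, however, the central boundedness claim is unjustified, and without the extra input it is false.
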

\begin{proof}
As a consequence of Lemma \ref{L1}, we know that $\theta(t)$ tends to a limit $\Theta$ as $t\rightarrow \infty$. Moreover  if
$\sin \Theta \cos \Theta\neq 0$, we find as $t\rightarrow \infty$:
$$
\vert \theta'\vert  \geq  c\frac{l+2}{2(l+1)} r^{\frac{2(\alpha -l)}{l+2}}\vert \sin\Theta \cos\Theta\vert |\sin \Theta|^{\frac{2(\alpha-2)}{l+2}},
$$
and since by (\ref{22})  we have $r^{2\frac{\alpha-l}{l+2}}(t)\geq \frac{\eta}{t}$, this contradicts boundedness of $\theta(t)$.
we have only 2 possible cases\\
{\bf{Case 1. $\cos\Theta =0$}}. Then $|\sin\Theta|= 1$. In this case for t
large
$ |u'|^{l+2}\sim r^2(t),$
therefore
$$
|u'|^{\alpha +2}\sim r^{\frac{2(\alpha+2)}{l+2}}(t)=
 \Big(\frac{l+2}{l+1}E(t)\Big)^{\frac{\alpha+2}{l+2}}.
$$
Hence
$$
\frac{d}{dt}E(t) = -c|u'(t)|^{\alpha+2}\leq -\rho E^{\frac{\alpha+2}{l+2}}(t)
$$ for some $\rho>0$, then (\ref{A}) follows at once.\\
{\bf{Case 2.}} $\sin \Theta=0$. Then $|\cos \Theta|= 1$. In this case  as
$t\rightarrow \infty$
$$
r(t)\sim \Big(\frac{d(l+2)}{(\beta+2)(l+1)}\Big)^{\frac{1}{2}}|u|^{\frac{\beta}{2}+1},
$$
and
$$
\Frac{|u'|^{\frac{l}{2}}u'}{|u|^{\frac{\beta}{2}}u}=\frac{r(t)\sin \theta}{r(t)\cos\theta}\rightarrow 0.
$$
In particular for some $C>$ we have $ |u'(t)| \leq C|u|^{1+\frac{\beta-l}{l+ 2}} $ therefore for $t$ large enough and some $\gamma>0$, $|u(t)| \geq \gamma t^{-\frac{l+2}{\beta-l}}$. By
the non-oscillation result we may assume, replacing if necessary $u$ by $-u$,  that $u>0$ and $u'<0$ for
t large. Then, multiplying (\ref{moT}) by $|\sin \theta|^{\frac{l}{l+2}}$ and integrating it on $(t,2t)$, we obtain
 {\small\begin{equation*}
 \begin{split}
&0=\Int_t^{2t}\theta'|\sin\theta|^{\frac{l}{l+2}}\,ds+A\Int_t^{2t}r(s)^{\frac{2(\alpha-l)}{l+2}}\sin
\theta \cos \theta|\sin \theta|^{\frac{2\alpha-l}{l+2}} \,ds\\&+B
\Int_t^{2t}r(s)^{\frac{2(\beta+1)}{\beta+2}-\frac{2(l+1)}{l+2}}|\cos
\theta|^{\frac{\beta}{\beta+2}}\,ds\\&=
\Int_t^{2t}\theta'|\sin\theta|^{\frac{l}{l+2}}\,ds
+A\Int_t^{2t}r(s)^{\frac{2(\alpha-l)}{l+2}-1}r(s)\sin
\theta \cos \theta|\sin \theta|^{\frac{2\alpha-l}{l+2}} \,ds\\&+B
\Int_t^{2t}r(s)^{\frac{2(\beta+1)}{\beta+2}-\frac{2(l+1)}{l+2}}|\cos
\theta|^{\frac{\beta}{\beta+2}}\,ds \\&=
\Int_t^{2t}\theta'|\sin\theta|^{\frac{l}{l+2}}\,ds+A\Int_t^{2t}r(s)^{\frac{2(\alpha-l)}{l+2}-1}|u'|^{\frac{l}{2}}u' \cos \theta|\sin
\theta|^{\frac{2\alpha-l}{l+2}}\,ds
\\&+B
\Int_t^{2t}r(s)^{\frac{2(\beta+1)}{\beta+2}-\frac{2(l+1)}{l+2}}|\cos
\theta|^{\frac{\beta}{\beta+2}}\,ds.
\end{split}
\end{equation*}}Hence for t large
$$
\Int_t^{2t}r(s)^{\frac{2(\alpha-l)}{l+2}-1}|u'|^{\frac{l}{2}+1}
\cos \theta |\sin \theta|^{\frac{2\alpha-l}{l+2}}\,ds = -\Int_t^{2t}r(s)^{\frac{2(\alpha-l)}{l+2}-1}|u'|^{\frac{l}{2}}u'
\cos \theta |\sin \theta|^{\frac{2\alpha-l}{l+2}}\,ds $$ $$
 =\frac{1}{A}\Int_t^{2t}\theta'|\sin\theta|^{\frac{l}{l+2}}\,ds+
 \frac{B}{A}\Int_t^{2t}r(s)^{\frac{2(\beta-l)}{(\beta+2)(l+2)}}|\cos
\theta|^{\frac{\beta}{\beta+2}}\,ds $$ Introducing the function $$ F(x): = \int_0^x |\sin y|^{\frac{l}{l+2}}dy $$ we find $$\Int_t^{2t}\theta'|\sin\theta|^{\frac{l}{l+2}}\,ds = \Int_t^{2t}\theta'(s)F'(\theta(s))\,ds = F(\theta(2t))- F(\theta(t))$$ which tends to $0$ and in particular is bounded for $t$ large. Therefore, since $r(t)\sim
\Big(\frac{d(l+2)}{(\beta+2)(l+1)}\Big)^{\frac{1}{2}}|u|^{\frac{\beta}{2}}u$
and $u$ is positive, non-increasing, for t large since $\cos \theta$ approaches $1$ we deduce
\begin{equation}\label{int} \Int_t^{2t}r(s)^{\frac{2(\alpha-l)}{l+2}-1}|u'|^{\frac{l}{2}+1}
 |\sin \theta|^{\frac{2\alpha-l}{l+2}}\,ds \leq
 C_2\Big(1+t u(t)^{\frac{\beta-l}{l+2}}\Big)\leq 2C_2t u(t)^{\frac{\beta-l}{l+2}}.
\end{equation}  Now we observe that
$$ r^{\frac{2(\alpha-l)}{l+2}-1}|u'|^{\frac{l}{2}+1}
 |\sin \theta|^{\frac{2\alpha-l}{l+2}} = |u'|^{\frac{l}{2}+1}
 |r\sin \theta|^{\frac{2\alpha-l}{l+2}} r^{-1- {\frac{l}{l+2}}} = c_0  |u'|^{\frac{l}{2}+1+{\frac{2\alpha-l}{2}}} r^{-1- {\frac{l}{l+2}}}$$ for some positive constant $c_0$. After reduction this gives
 $$ r^{\frac{2(\alpha-l)}{l+2}-1}|u'|^{\frac{l}{2}+1}
 |\sin \theta|^{\frac{2\alpha-l}{l+2}} \sim  c_1  |u'|^{\alpha+1} u^{- {\frac{(l+1)(\beta+2)}{l+2}}}$$ and from \eqref{int}we now deduce
\begin{equation}
 \Int_t^{2t}|u'|^{\alpha+1} u^{- {\frac{(l+1)(\beta+2)}{l+2}}}ds\leq
 C_3tu(t)^{\frac{\beta-l}{l+2}},
 \end{equation}

By using Holder's inequality, we deduce
$$
 \Int_t^{2t}|u(s)|^{-\frac{(l+1)(\beta+2)}{(l+2)(\alpha+1)}}|u'(s)|\,ds \leq \Big(\Int_t^{2t}|u'|^{\alpha+1} u^{- {\frac{(l+1)(\beta+2)}{l+2}}}ds\Big)^{\frac{1}{\alpha+1}} t ^{\frac{\alpha}{\alpha+1}}\leq
 C_4tu(t)^{\frac{\beta-l}{(l+2)(\alpha+1)}}
$$
in other words
$$
u^{-\delta}(2t)-u^{-\delta}(t)=\Int_t^{2t} \frac{d}{ds}[u^{-\delta}(s)]\,ds\leq
 C_5tu(t)^{\frac{\beta-l}{(l+2)(\alpha+1)}}
$$
with $\delta= \frac{(l+1)(\beta+2)}{(l+2)(\alpha+1)}-1= \frac{(l+1)(\beta+2)-(l+2)(\alpha+1)}{(l+2)(\alpha+1)}$. First we check that $\delta>0$.  Indeed the condition on $\alpha$  implies $$\beta(l+1)+ l -\alpha(\beta+2)>0 $$
and on the other hand we have $$(l+1)(\beta+2)-(l+2)(\alpha+1)= (l+1)\beta+ 2l +2- (l+2)-(l+2)\alpha= (l+1)\beta+l-(l+2)\alpha$$ $ \ge \beta(l+1)+ l -\alpha(\beta+2)$ since $l\le \beta$. We claim that there is a set $S\subset (0,+\infty)$ containing arbitrarily large numbers such that for some $\nu> 0$
\begin{equation}\label{Z}
\forall t \in S,\,\,\, u^{-\delta}(2t)-u^{-\delta}(t)\geq \nu u^{-\delta}(t).
\end{equation}
Indeed by  Lemma 3.2 in {\bf\cite{{har2}}}, since $u$ decays as a negative power of $t$ we can find a set $S\subset (0,+\infty)$ containing arbitrarily large numbers for which
$$
\forall t\in S,\,\,\,\ u(2t)\leq e^{-\gamma}u(t).
$$
Hence
$$
\forall t\in S,\,\,\,\ u^{-\delta}(2t)\geq e^{\delta \gamma}u^{-\delta}(t),
$$
and then
$$
\forall t\in S,\,\,\,\ u^{-\delta}(2t)-u^{-\delta}(t)\geq (e^{\delta \gamma}-1)u^{-\delta}(t).
$$
Hence we have (\ref{Z}) with $\nu=e^{\delta \gamma}-1$. Now, we have for some $C_6> 0$ and $\sigma =\frac{1}{C_6}$
{\small\begin{equation*}
 \begin{split}
&\forall t \in S,\,\,\,
u^{-\delta}(t)\leq \frac{1}{\nu}(u^{-\delta}(2t)-u^{-\delta}(t))
\leq C_6tu(t)^{\frac{\beta-l}{(l+2)(\alpha+1)}}
\\&\Rightarrow \forall t \in S,\,\,\, u^{\delta+{\frac{\beta-l}{(l+2)(\alpha+1)}}}(t)\geq \sigma t^{-1},
\end{split}
\end{equation*}}
with
{\small\begin{equation*}
 \begin{split}
\delta+{\frac{\beta-l}{(l+2)(\alpha+1)}}&=\frac{(l+1)(\beta+2)-(l+2)(\alpha+1)+\beta-l }{(l+2)(\alpha+1)}
\\&= \frac{l(\beta+1)-(\alpha+1)l+2(\beta-\alpha) }{(l+2)(\alpha+1)}= \frac{\beta-\alpha }{\alpha+1}
\end{split}
\end{equation*}}
finally, we find for some  $\sigma' > 0$
$$
\forall t \in S,\,\,\,\, u(t)\geq \sigma' t^{-\frac{\alpha+1}
{\beta-\alpha}}.
$$
We see from (\ref{2}) that
$$
\forall t \in S,\,\,\,\, E(t)\geq \frac{1}{\beta+2} |u(t)|^{\beta+2} \geq \sigma'' t^{-\frac{(\alpha+1)(\beta+2)}
{\beta-\alpha}},
$$
therefore
$$
\limsup_{t\rightarrow \infty}t^{\frac{(\alpha+1)(\beta+2)}
{\beta-\alpha}}E(t)>0.
$$
The proof of Theorem \ref{Non} is now completed.\end{proof}
\begin{remark}
Assume  $l < \alpha < \frac{\beta(l+1)+l}{\beta+2}$. Then if a  solution u of (\ref{1}) satisfies (\ref{A}) there exists a positive constant $C$ such that
$$
\forall t \geq 1,\,\,\, |u(t)|\leq Ct^{-\frac{l+1-\alpha}{\alpha-l}},
$$
and
$$
\forall t \geq 1,\,\,\, |u'(t)|\leq Ct^{-\frac{1}{\alpha-l}}.
$$
\end{remark}

\begin{remark} The next result shows that there are actually some solutions u of (\ref{1}) satisfiying (\ref{A})
\end{remark}
\begin{theorem}\label{C}
Let $l < \alpha < \frac{\beta(l+1)+l}{\beta+2}$. Then, there exists a solution $u> 0$ of
(\ref{1}) such that for some constant $C> 0$
$$
\forall t \geq 0,\,\,\, u(t)\leq C(1+t)^{-\frac{1-\alpha+l}{\alpha-l}},\,\,\,\,\,\,
|u'(t)|\leq C(1+t)^{-\frac{1}{\alpha-l}}.
$$
\end{theorem}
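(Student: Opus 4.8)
The plan is to bypass the time variable and construct the required solution in the phase plane, where the fast decay corresponds to a \emph{separatrix} of a first order ODE. This seems cleaner than a direct fixed point in $t$: by Theorem \ref{Non} the fast regime \eqref{A} is the exceptional one, so a naive time-domain iteration would be unstable. By Theorem \ref{ml} I look for $u>0$ strictly decreasing, and take $u$ itself as independent variable, writing $P(u)=-u'>0$. Since $\frac{du}{dt}=-P$, writing $\frac{d}{dt}=-P\frac{d}{du}$ in \eqref{1} and simplifying gives
\begin{equation*}
\frac{dP}{du}=\frac{c}{l+1}\,P^{\alpha-l}-\frac{d}{l+1}\,\frac{u^{\beta+1}}{P^{l+1}},\qquad u\in(0,a],\ P>0 .
\end{equation*}
Conversely, from any positive solution $P$ of this equation one recovers a genuine solution of \eqref{1}: setting $t=t_0+\int_u^a P(v)^{-1}\,dv$ makes $u(t)$ decreasing with $u'=-P$, and a short computation gives $\big(|u'|^lu'\big)'=cP^{\alpha+1}-du^{\beta+1}$, which is continuous, so $|u'|^lu'\in\mathcal C^1$ and \eqref{1} holds.

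First I would construct the separatrix near the singular endpoint $u=0$. Dropping the lower-order term suggests the friction-dominated balance $\frac{dP}{du}\approx\frac{c}{l+1}P^{\alpha-l}$, whose solution vanishing at $0$ is $P\sim\mu\,u^{1/(l+1-\alpha)}$ with $\mu=\big(\frac{c(l+1-\alpha)}{l+1}\big)^{1/(l+1-\alpha)}$; note $l<\alpha<l+1$ (both follow from the hypotheses, the second since $\frac{\beta(l+1)+l}{\beta+2}<l+1$), so the exponents are positive. The key structural fact is that the neglected term is genuinely subcritical: its ratio to $P^{\alpha-l}$ is $\sim u^{\,[\beta(l+1)+l-\alpha(\beta+2)]/(l+1-\alpha)}$, and the bracket is $>0$ \emph{precisely} under the hypothesis $\alpha<\frac{\beta(l+1)+l}{\beta+2}$. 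Substituting $P=\mu u^{1/(l+1-\alpha)}(1+\psi)$ turns the equation into a fixed-point problem for $\psi$ with $\psi\to0$ as $u\to0$, which is a contraction on a small interval $(0,\varepsilon]$ thanks to this positive margin. Standard continuation then extends $P$ to $(0,a]$ on which $P>0$, taking $a$ small so the negative term cannot drive $P$ to $0$.

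Then I would reconstruct $u(t)$ and read off the rates. Because $\alpha>l$ we have $\frac{1}{l+1-\alpha}>1$, so $\int_0 P(v)^{-1}\,dv$ diverges; hence $t\to+\infty$ as $u\to0^+$ and $u$ is defined on a half-line $[t_0,\infty)$ with $u\to0$. Inverting $t\sim\mathrm{const}\cdot u^{-(\alpha-l)/(l+1-\alpha)}$ gives $u(t)\sim C\,t^{-p}$ with $p=\frac{l+1-\alpha}{\alpha-l}$ and $|u'(t)|=P\sim C'\,t^{-1/(\alpha-l)}$, i.e. the solution realizes alternative \eqref{A}. Finally, since \eqref{1} is autonomous, the translate $\tilde u(t)=u(t+t_0)$ is a solution on all of $[0,\infty)$ with $\tilde u(0)=a>0$, and the bounds $\tilde u(t)\le C(1+t)^{-p}$, $|\tilde u'(t)|\le C(1+t)^{-1/(\alpha-l)}$ follow from the asymptotics (the compact initial piece being absorbed into $C$), which is the assertion with $p=\frac{1-\alpha+l}{\alpha-l}$.

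The hard part will be the analysis at the singular point $u=0$: one must show that among all trajectories of the first order ODE there is exactly one with the friction-dominated behaviour $P\sim\mu u^{1/(l+1-\alpha)}$, and control the correction $\psi$. This is where the subcriticality inequality $\beta(l+1)+l-\alpha(\beta+2)>0$ is indispensable, since it furnishes the positive power of $u$ that makes the perturbation contractive; without it the term $d|u|^\beta u$ would be of the same order as the friction term and the separatrix would not carry the claimed rate.
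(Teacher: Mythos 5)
Your proposal is correct, but it proves the theorem by a genuinely different route than the paper. The paper works entirely in the time domain: setting $v=|u'|^{l+1}$, so that $u(t)=\int_{t+1}^\infty v^{1/(l+1)}(s)\,ds$, it rewrites \eqref{1} as the integro-differential problem $v'+v^{(\alpha+1)/(l+1)}=\varepsilon\,\mathcal{K}v$ with $\mathcal{K}v=\big(\int_t^\infty v^{1/(l+1)}(s)\,ds\big)^{\beta+1}$, and solves it by the monotone iteration $v_n=\mathcal{T}^n(0)$ in the weighted spaces $X,Y$; the key ingredients are the explicit supersolution $w(t)=\big(\frac{l+2}{\alpha-l}\big)^{(l+1)/(\alpha-l)}t^{-(l+1)/(\alpha-l)}$ of Lemma \ref{W}, the mapping property $\mathcal{K}(X^+)\subset Y^+$ --- which is exactly where the hypothesis $\alpha<\frac{\beta(l+1)+l}{\beta+2}$ enters there --- and a final homogeneity argument to absorb the $\varepsilon$. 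You instead pass to the phase plane, take $u$ as the independent variable with $P(u)=-u'$, and construct the separatrix $P\sim\mu u^{1/(l+1-\alpha)}$ of the singular first-order equation; your computations check out: the reduction $\frac{dP}{du}=\frac{c}{l+1}P^{\alpha-l}-\frac{d}{l+1}u^{\beta+1}P^{-(l+1)}$ is right, $\mu$ is right, the perturbation ratio is $u^{[\beta(l+1)+l-\alpha(\beta+2)]/(l+1-\alpha)}$ so the standing hypothesis enters precisely as the positive margin you claim, and integrating $du/dt=-P$ back gives $u\sim Ct^{-(l+1-\alpha)/(\alpha-l)}$ and $|u'|\sim C't^{-1/(\alpha-l)}$ as asserted. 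Your ``hard part'' does go through: with $P=\mu u^{\gamma}(1+\psi)$, $\gamma=1/(l+1-\alpha)$, the equation becomes $(u\psi)'=N(\psi)-Ku^{\sigma}(1+\psi)^{-(l+1)}$ with $N(\psi)=O(\psi^2)$ and $\sigma>0$, so the integrating-factor formulation $\psi(u)=u^{-1}\int_0^u\big[N(\psi)-Ks^{\sigma}(1+\psi)^{-(l+1)}\big]ds$ is a contraction on a small ball of $C((0,\varepsilon])$ and its fixed point automatically satisfies $\psi=O(u^{\sigma})$; note that only existence of this trajectory is needed, not the uniqueness you allude to. As for what each approach buys: yours yields the exact asymptotics with the explicit constant --- it recovers $\lim_{t\to\infty}t^{1/(\alpha-l)}|u'(t)|=\big(\frac{l+1}{c(\alpha-l)}\big)^{1/(\alpha-l)}$, i.e. Theorem \ref{mp}, in the same stroke --- and it exhibits the fast solutions as a separatrix, which dovetails with the paper's final theorem that the slow solutions fill an open set of initial data; the paper's argument is softer (comparison principle plus monotone limits, no analysis at a singular point) and avoids inverting the time change, at the cost of producing only the upper bounds stated in the theorem.
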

\begin{proof} By homogeneity it is sufficient to prove the result for $c=d=1.$ We introduce two Banach spaces $X$ and $Y$ as follows
$$
X=\{z\in C([1,+\infty),\,\,\, t^{\frac{l+1}{\alpha-l}}z(t)
\in L^{\infty}[1,+\infty))\},
$$
with norm
$$
\forall z \in X,\,\,\,\,\, \|z\|_X =
\|t^{\frac{l+1}{\alpha-l}}z(t)\|_{L^\infty([1,+\infty))},
$$
and
$$
Y=\{z\in C[1,+\infty),\,\,\,\, t^{\frac{\alpha+1}{\alpha-l}}z(t) \in L^\infty([1,+\infty))\},
$$
$$
\forall z \in Y,\,\,\,\, \|z\|_Y= \|t^{\frac{\alpha+1}{\alpha-l}}z(t)\|_{L^\infty([1,+\infty))}.
$$ for convenience we also consider 
$$ X^+ = 
\{z\in X, z\ge 0 \} 
$$ and 
$$ Y^+ = 
\{z\in Y, z\ge 0 \} 
$$ 
The proof proceeds in $3$ steps.\\\\
 {\bf{Step 1. A Preliminary Estimate.}}  Let $f \in Y^+,\,\,\,\varphi
 \in \R^+$ and consider the problem
 \begin{equation}\label{33}
 v'+v^{\frac{\alpha+1}{l+1}}=f;\,\,\,\,\, v(1)=\varphi.
 \end{equation}
 \begin{lemma}\label{W}
 Under the conditions
 $$
 |\varphi|\leq \Big(\frac{l+2}{\alpha-l}\Big)^{\frac{l+1}{\alpha-l}},\,\,\,\,
 \|f\|_Y\leq \Big(\frac{l+2}{\alpha-l}\Big)^{\frac{l+1}{\alpha-l}}
 \Big(\frac{1}{\alpha-l}\Big)
 $$
 the unique solution $v$ of (\ref{33}) is in $X^+$ with
 $$
 \|v\|_X\leq \Big(\frac{l+2}{\alpha-l}\Big)^{\frac{l+1}{\alpha-l}}.
 $$
 \end{lemma}
 {\bf{Proof.}}
  Let
 \begin{equation}\label{HH}
 w(t)= \Big(\frac{l+2}{\alpha-l}\Big)^{\frac{l+1}{\alpha-l}}t^{-\frac{l+1}{\alpha-l}}.
 \end{equation}
 We have
 $w(1)=\Big(\frac{l+2}{\alpha-l}\Big)^{\frac{l+1}{\alpha-l}}$ and
 {\small\begin{equation*}
 \begin{split}
 w'+w^{\frac{\alpha+1}{l+1}}&=
 \Big(\frac{l+2}{\alpha-l}\Big)^{\frac{1}{\alpha-l}}
 \Big[-\Big(\frac{l+2}{\alpha-l}\Big)^{\frac{l}{\alpha-l}}
 \Big(\frac{l+1}{\alpha-l}\Big)+\Big(\frac{l+2}{\alpha-l}\Big)^{\frac{\alpha}{\alpha-l}}\Big]
 t^{-\frac{\alpha+1}{\alpha-l}}\\&=
 \Big(\frac{l+2}{\alpha-l}\Big)^{\frac{l+1}{\alpha-l}}
  \Big(\frac{1}{\alpha-l}\Big)
 t^{-\frac{\alpha+1}{\alpha-l}}\ge f.
 \end{split}
\end{equation*}} Then the result follows from the elementary comparison principle for first order ODE with a locally Lipschitz continuous nonlinearity. \\

{\bf{Step 2. An Integrodifferential problem. }}
We introduce the integral operator $\mathcal{K}$ defined on the positive cone $\mathcal{C}$ of $L^{1}_{loc}([1,+\infty))$ by
$$
\forall v \in \mathcal{C},\,\,\,\, \forall t \in [1,+\infty),\,\,\,\,
\mathcal{K}(v)(t)=\Big|\Int_t^{\infty}v^{\frac{1}{l+1}}(s)\,ds\Big|^\beta
\Int_t^\infty v^{\frac{1}{l+1}}(s)\,ds.
$$
We claim that for each $v \in X^+$ , 
$\mathcal{K}(v)$ is finite  everywhere and moreover  $\mathcal{K}(X^+)\subset Y^+$ with
\begin{equation}\label{X}
\forall v\in X^+,\,\, \|\mathcal{K}(v)\|_Y\leq C\|v\|_X^{\beta+1}.
\end{equation}
Indeed, since clearly $\alpha < 1+l$, an easy calculation shows that  for all $v\in X^+$, $v^{\frac{1}{l+1}}$ is integrable on $([1,+\infty))$. Moreover, for all $t\in [1,+\infty)$ we have

{\small\begin{equation*}
 \begin{split}
|\mathcal{K}(v)(t)|&=
\Big|\Int_t^{\infty}v^{\frac{1}{l+1}}(s)\,ds\Big|^{\beta+1}\\&=
\Big|\Int_t^{\infty}s^{\frac{1}{\alpha-l}}v^{\frac{1}{l+1}}(s)s^{-\frac{1}{\alpha-l}}
\,ds\Big|^{\beta+1}\\&=
\Big|\Int_t^{\infty}\Big(s^{\frac{l+1}{\alpha-l}}v(s)\Big)^{\frac{1}{l+1}}s^{-\frac{1}{\alpha-l}}
\,ds\Big|^{\beta+1}
\\&\leq \Big| \Big(\sup_{t\geq 1} t^{\frac{l+1}{\alpha-l}}v(t)\Big)^{\frac{1}{l+1}}\Int_t^\infty
s^{-\frac{1}{\alpha-l}}\,ds\Big|^{\beta+1}\\& \leq
\Big(C(\alpha,l)\|v(t)\|_X^{\frac{1}{l+1}}
t^{1-\frac{1}{\alpha-l}}\Big)^{\beta+1}\\&
 \leq
C'(\alpha,\beta,l)\|v(t)\|^{\frac{\beta+1}{l+1}}_X
t^{(-\frac{1}{\alpha-l}+1)(\beta+1)}.
\end{split}
\end{equation*}}
But
$$
\alpha < \frac{\beta(l+1)+l}{\beta+2}\Rightarrow \alpha-l <
\frac{\beta-l}{\beta+2}
$$
{\small\begin{equation*}
 \begin{split}
 \frac{\beta-l}{\alpha-l}-(\beta+1)-1> 0
 &\Rightarrow
 \frac{\beta-l}{\alpha-l}-(\beta+1)-1+\frac{\alpha+1}{\alpha-l}>\frac{\alpha+1}{\alpha-l}\\&\Rightarrow
 \frac{\beta-l}{\alpha-l}
-(\beta+1)+\frac{l+1}{\alpha-l}>\frac{\alpha+1}{\alpha-l}
\\&\Rightarrow (\beta+1)\Big(\frac{1}{\alpha-l}-1\Big)>
\frac{\alpha+1}{\alpha-l},
\end{split}
\end{equation*}} and (\ref{X}) follows easily. Now,
we consider for $\varepsilon$ small enough the solution $z= \mathcal{T}(v)$
of the perturbed problem
$$ z'+z^{\frac{\alpha+1}{l+1}}=\varepsilon \mathcal{K} v,\,\,\,\, z(1)=\varphi $$
Let
$$ B:=\Big\{z\in X^+,\,\,\,\,\ \|z\|_X \leq \Big(\frac{l+2}{\alpha-l}\Big)
^{\frac{l+1}{\alpha-l}}\Big\} $$ and fix $\varepsilon>0$ small enough to insure
\begin{equation}
\varepsilon C\Big(\frac{l+2}{\alpha-l}\Big)^{\frac{\beta+1}{\alpha-l}}\leq
\Big(\frac{1}{\alpha-l}\Big)\Big(\frac{l+2}{\alpha-l}\Big)^{\frac{l+1}{\alpha-l}}
\end{equation}
As a consequence of Lemma \ref{W}, we have $\mathcal{T}(B) \subset B$.\\

{\bf{Step 3. An Iterative Scheme. }} We consider the sequence $v_n=\mathcal{T}^n(0)$ defined as follows: $v_1$ is the solution of
$$ v'+v^{\frac{\alpha+1}{l+1}} =0,\,\,\,\, v(1)=\varphi. $$
Clearly, $v_1$ is non-negative, non-increasing and $v_1 \in B$. When $v_n$ is know, we define $v_{n+1}$ as the solution of
$$ v'+v^{\frac{\alpha+1}{l+1}}=\varepsilon\mathcal{K}v_n ,\,\,\,\, v(1)=\varphi.$$
Then $v_n$ is non-increasing, non-negative and bounded by a fixed positive element of $X$, ${v_n}^{\frac{1}{l+1}}$ is bounded by a fixed function of $L^1([1,+\infty))$  and since $v'_n$ is uniformly bounded, $v_n$ converges locally uniformly and $v_n^{\frac{1}{l+1}}$ converges in
$L^1([1,+\infty))$. The limit $v$ of $v_n$ is a solution of
$$
v'+v^{\frac{\alpha+1}{l+1}}=\varepsilon\mathcal{K}v ,\,\,\,\, v(1)=\varphi.
$$
{\bf{Step 4. Conclusion. }}
Therefore, $v$ is a positive solution of
\begin{equation}\label{000}
v'+v^{\frac{\alpha+1}{l+1}}= \varepsilon \Big|\Int_t^\infty v^{\frac{1}{l+1}}(s)\,ds\Big|^\beta
\Int_t^\infty v^{\frac{1}{l+1}}(s)\,ds,\,\,\,\, v(1)=\varphi.
\end{equation}
let
\begin{equation}\label{S}
\forall t \geq 0,\,\,\,\ u(t)=\Int_{t+1}^\infty v^{\frac{1}{l+1}}(s)\,ds.
\end{equation}
Then $u\geq 0$ and
$$u'=-v^{\frac{1}{l+1}}(.+1),\,\,\,\,\,\, (|u'|^{l}u')'=
-\Big(|v(.+1)|^{\frac{l}{l+1}}v^{\frac{1}{l+1}}(.+1)\Big)'=-v'.$$
Hence, (\ref{000}) rewrites as
$$
-(|u'|^l u')'-|u'|^\alpha u' = \varepsilon |u|^\beta u.
$$
Since $u\geq 0$, we get
$$
v'+v^{\frac{\alpha+1}{l+1}}\leq 0
\Rightarrow \frac{v'}{v^{\frac{\alpha+1}{l+1}}}\leq -1,
$$
integrating over $(1,t)$, we have
$$
-\frac{l+1}{\alpha-l}v^{-\frac{\alpha-l}{l+1}}(t)\leq -t+1-\frac{l+1}{\alpha-l}v^{-\frac{\alpha-l}{l+1}}(1),
$$
therefore
$$
v^{-\frac{\alpha-l}{l+1}}(t)\geq C(1+t),
$$
where $\alpha> l$, we get
$$
|v(t)|\leq C(1+t)^{-\frac{l+1}{\alpha-l}}.
$$
Since $v\in X$ and using (\ref{S}), we have finally
$$
|u(t)| \leq C_1(1+t)^{-(\frac{1}{\alpha-l}-1)},\,\,\,\,
 |u'(t)|\leq C_2(1+t)^{-\frac{1}{\alpha-l}}.
$$
This concludes the proof of Theorem \ref{C}.\end{proof}
\begin{theorem}
Let $\alpha < \frac{\beta(l+1)+l}{\beta+2},\,\,c>0,\,\,d>0$. Then (\ref{1}) has an open set of initial data leading to a  slow solution, which means a solution satifying \eqref{B}.
\end{theorem}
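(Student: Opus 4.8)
The plan is to exhibit a forward-invariant region in phase space lying entirely in the ``slow'' regime, whose interior already contains an explicit open set of initial data; this way no continuous-dependence or uniqueness argument is needed. By the oddness of \eqref{1} it suffices to produce slow solutions with $u>0$. For $u>0$ and $p:=-u'\ge 0$ I would rewrite the equation as the first-order system
\[
u'=-p,\qquad (l+1)\,p^{l}p'=d\,u^{\beta+1}-c\,p^{\alpha+1},
\]
equivalent to \eqref{1} as long as $u>0$; the only singular locus is $p=0$, where \eqref{1} itself gives $(|u'|^{l}u')'=-d\,u^{\beta+1}<0$, so that $p$ is forced to increase from $0$ and the half-plane $p\ge 0$ cannot be exited downward.

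Next, set $\kappa=\Frac{\beta+1}{\alpha+1}>1$ (note $\beta>\alpha$) and consider, for an $a>0$ small and an $M>0$ large to be fixed, the curved region
\[
R=\{(u,p):\ 0<u\le a,\ 0\le p\le M u^{\kappa}\}.
\]
On the upper edge $p=Mu^{\kappa}$ one computes, using $\kappa(\alpha+1)=\beta+1$,
\[
\Frac{d}{dt}\big(p-Mu^{\kappa}\big)=\Frac{d-cM^{\alpha+1}}{(l+1)M^{l}}\,u^{\,\beta+1-\kappa l}+M^{2}\kappa\,u^{\,2\kappa-1}.
\]
Choosing $M$ with $cM^{\alpha+1}>d$ makes the first coefficient negative. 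The decisive point is the exponent comparison: $\beta+1-\kappa l<2\kappa-1$ is equivalent to $(\beta+2)(\alpha+1)<(\beta+1)(l+2)$, which is exactly the hypothesis $\alpha<\Frac{\beta(l+1)+l}{\beta+2}$. Hence the negative, lower-order (in $u$) term dominates for all $u\le a$ once $a$ is small, so $\frac{d}{dt}(p-Mu^{\kappa})<0$ on the edge: trajectories cross it inward. Together with $u'=-p\le 0$ (preserving $u\le a$) and the behaviour at $p=0$ above, this shows $R$ is forward-invariant.

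Inside $R$ one has $0\le p\le Mu^{\kappa}$, whence $\Frac{d}{dt}u^{1-\kappa}=(\kappa-1)\Frac{p}{u^{\kappa}}\le(\kappa-1)M$, so $u^{1-\kappa}(t)\le C(1+t)$ and therefore $u(t)\ge C'(1+t)^{-\frac{\alpha+1}{\beta-\alpha}}$ for all $t$. By \eqref{2} this yields $E(t)\ge\frac{d}{\beta+2}u(t)^{\beta+2}\ge C''(1+t)^{-\frac{(\alpha+1)(\beta+2)}{\beta-\alpha}}$, i.e. \eqref{B} (indeed with $\liminf$ in place of $\limsup$). Finally, every datum in the open set
\[
\mathcal{O}=\Big\{(u_{0},u_{1})\in\R^{2}:\ 0<u_{0}<a,\ -M u_{0}^{\kappa}<u_{1}<0\Big\}
\]
has $(u_{0},-u_{1})\in\mathrm{int}\,R$ at $t=0$; since $R$ is forward-invariant, the corresponding solution (global by Proposition \ref{Pr1}) stays in $R$, remains positive and decreasing, and hence is slow. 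Applying the same construction to $-u$ gives a second open set.

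The heart of the argument is the boundary computation above, and specifically the observation that the structural hypothesis on $\alpha$ is precisely the inequality $(\beta+2)(\alpha+1)<(\beta+1)(l+2)$ under which the $u^{\beta+1-\kappa l}$ term, produced by the balance between damping and restoring force, beats the geometric $u^{2\kappa-1}$ term near the origin; this is exactly what makes $R$ trap trajectories rather than let them escape toward the fast regime $p/u^{\kappa}\to\infty$. The subsidiary points — invariance across the singular edge $p=0$, the calibration of $M$ then $a$, and the fact that $\mathcal{O}$ already sits inside $R$ at the initial instant so that neither uniqueness nor continuous dependence is invoked — should be routine once $R$ is correctly chosen.
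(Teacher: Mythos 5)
Your proposal is correct, but it takes a genuinely different route from the paper's. The paper stays in the coordinates $z=\mathrm{const}\cdot|u|^{\beta/2}u$, $w=|u'|^{l/2}u'$ of its polar analysis and traps the trajectory in the sector $S_{\varepsilon,M}=\{z<0,\ z^2+w^2<\varepsilon^2,\ 0<w/|z|<M\}$, bounded by a straight ray (which in your variables is the curve $p\sim u^{(\beta+2)/(l+2)}$, a different curve from yours); invariance there is proved via energy decay (for the circular arc), the direction of the field on the ray and near the axis, and backward uniqueness at the origin, and slowness is then deduced \emph{indirectly}: confined solutions have $|\tan\theta|\le M$, whereas fast solutions have $|\tan\theta|\to\infty$, so the alternative of Theorem \ref{Non} forces \eqref{B}. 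You instead work directly in $(u,p)$, $p=-u'$, with the curved region $p\le Mu^{\kappa}$, $\kappa=\frac{\beta+1}{\alpha+1}$, calibrated to the exact scaling of slow solutions, and your boundary computation reduces invariance to precisely the hypothesis $(\alpha+1)(\beta+2)<(\beta+1)(l+2)$ (the equivalence you state is correct). What your route buys: it is self-contained (no appeal to Theorem \ref{Non}, hence no need of the hypothesis $\alpha>l$ that theorem carries), and the inequality $\frac{d}{dt}u^{1-\kappa}\le(\kappa-1)M$ gives the explicit bound $u(t)\ge C(1+t)^{-\frac{\alpha+1}{\beta-\alpha}}$, i.e. \eqref{B} with $\liminf$ in place of $\limsup$, which is strictly stronger than what the paper proves. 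What the paper's route buys is economy: it recycles machinery (the $(z,w)$ system, the fast/slow dichotomy) already established.

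One point to patch in your write-up: your region $R$ is not closed — its closure contains $(0,0)$ — so the analysis of the three edges alone does not establish forward invariance; you must also exclude the trajectory reaching $(u,p)=(0,0)$ in finite time, i.e. simultaneous vanishing of $u$ and $u'$, which for this degenerate equation is a genuine possibility that the paper rules out by backward uniqueness (Proposition \ref{Pro}). Fortunately your own estimate closes this gap without any new idea: on the maximal interval $[0,T^*)$ of residence in $R$ one has $u^{1-\kappa}(t)\le u_0^{1-\kappa}+(\kappa-1)Mt$, so $u$ is bounded away from $0$ on bounded time intervals and cannot reach the origin at a finite $T^*$. Simply run this estimate together with (rather than after) the invariance claim, and the argument is complete.
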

\begin{proof}
For any solution $u$ of the equation we introduce the new coordinates $(z, w)$ defined by 
$$
z=\sqrt{\frac{d(l+2)}{(\beta+2)(l+1)}}|u|^{\frac{\beta}{2}}u,\,\,\, w=|u'|^{\frac{l}{2}}u',
$$ In particular we have 
$$
u'=|w|^{\frac{2}{l+2}}\sgn(w);\quad 
|u|^{\frac{\beta}{2}}= \Big(\frac{(\beta+2)(l+1)}{d(l+2)}\Big)^{\frac{\beta}{2(\beta+2)}}|z|^{\frac{\beta}{\beta+2}}
$$
and since
\begin{equation*}
\begin{split}
z'&=\frac{\beta+2}{2}\sqrt{\frac{d(l+2)}{(\beta+2)(l+1)}} |u|^{\frac{\beta}{2}}u'\\&
=\frac{\beta+2}{2}\Big(\frac{(\beta+2)(l+1)}{d(l+2)}\Big)^{-\frac{1}{2}} 
 \Big(\frac{(\beta+2)(l+1)}{d(l+2)}\Big)^{\frac{\beta}{2(\beta+2)}}|z|^{\frac{\beta}{\beta+2}}|w|^{\frac{2}{l+2}}\sgn(w)\\&=
 \frac{\beta+2}{2}\Big(\frac{(\beta+2)(l+1)}{d(l+2)}\Big)^{-\frac{1}{\beta+2}} |z|^{\frac{\beta}{\beta+2}}
 |w|^{\frac{2}{l+2}}\sgn(w)
 \\&=\frac{d(l+2)}{2(l+1)}\Big(\frac{(\beta+2)(l+1)}{d(l+2)}\Big)^{\frac{\beta+1}{\beta+2}} |z|^{\frac{\beta}{\beta+2}}
 |w|^{\frac{2}{l+2}}\sgn(w)
\end{split}
\end{equation*}
we find the equation for $z$: 
\begin{equation}\label{E1}
z'=a |z|^{\frac{\beta}{\beta+2}}
 w^{\frac{2}{l+2}},
\end{equation}
with $a=\frac{d(l+2)}{2(l+1)}\Big(\frac{(\beta+2)(l+1)}{d(l+2)}\Big)^{\frac{\beta+1}{\beta+2}} > 0$

Similarly we have 
$$
|u'|^\alpha u'= |w|^{\frac{2\alpha+2}{l+2}}\sgn(w)
$$
$$
|u|^\beta u= \Big(\frac{(\beta+2)(l+1)}{d(l+2)}\Big)^{\frac{\beta+1}{\beta+2}}|z|^{\frac{\beta}{\beta+2}} z,
$$
\begin{equation*}\Big(|u'|^lu'\Big)'=
(|w|^{\frac{l}{l+2}}w)'= 
\frac{2(l+1)}{l+2}|w|^{\frac{l}{l+2}}w'
\end{equation*} Hence by the equation 
\begin{equation*}
\begin{split}
\frac{2(l+1)}{l+2}|w|^{\frac{l}{l+2}}w'= -d\Big(\frac{(\beta+2)(l+1)}{d(l+2)}\Big)^{\frac{\beta+1}{\beta+2}}|z|^{\frac{\beta}{\beta+2}} z
-c|w|^{\frac{2\alpha+2}{l+2}}\sgn(w)
\end{split}
\end{equation*} which gives the equation in $w$
$$
w'=-\frac{d(l+2)}{2(l+1)}\Big(\frac{(\beta+2)(l+1)}{d(l+2)}\Big)^{\frac{\beta+1}{\beta+2}}|w|^{-\frac{l}{l+2}}|z|^{\frac{\beta}{\beta+2}} z-c\frac{l+2}{2(l+1)}
|w|^{\frac{2\alpha-l+2}{l+2}}\sgn(w)
$$
\begin{equation}\label{E2}
w'=-a|w|^{-\frac{l}{l+2}}|z|^{\frac{\beta}{\beta+2}} z-c\frac{l+2}{2(l+1)}
|w|^{\frac{2\alpha-l+2}{l+2}}\sgn(w)
\end{equation} valid whenever $w\not=0 $. 
For $u<0,\,\,u'>0$, we consider the region $S_{\varepsilon,M}$
$$
S_{\varepsilon,M}=\left\{(z,w)\in \R^2/ z< 0,\,\, z^2+ w^2< \varepsilon^2,\,\, 0<\frac{w}{|z|}<M\right\}.
$$
 For any finite $M$ given in advance, we shall show that for $\varepsilon$ small enough, the region $S_{\varepsilon,M}$ is positively invariant. To this end we introduce the vector
 $$ F(z,w) : = \Big(a |z|^{\frac{\beta}{\beta+2}}
 w^{\frac{2}{l+2}},-a|w|^{-\frac{l}{l+2}}|z|^{\frac{\beta}{\beta+2}} z-c\frac{l+2}{2(l+1)}
|w|^{\frac{2\alpha-l+2}{l+2}}\sgn(w)\Big)$$ so that as long $(z,w)$ remains in $
S_{\varepsilon,M}$ we have the equation
$$(z',w')= F(z,w) $$
 Setting
 $B_\varepsilon =\{(z,w)\in \R^2/ z^2+w^2 \leq \varepsilon^2\}$ , 
 since
 $$ \langle F(z,w),(z,w)\rangle=-c\frac{l+2}{2(l+1)}|w|^{\frac{2(\alpha+2)}{l+2}}\leq 0, 
 $$
 we find that the solution cannot escape  $S_{\varepsilon,M}$ at a point of  $\partial B_\varepsilon .$\\
 
   By backward  uniqueness it is clear  that 
   $(z,w)$  cannot leave $ S_{\varepsilon,M} $ through $(0, 0). $ We now show that if $\varepsilon$ is small enough, the solution cannot escape at any point of 
  $$ \triangle_M=\{(-\lambda,M\lambda),\,\, \lambda\in (0,+\infty)\}
  $$ lying in the closure of  $B_\varepsilon $.
  Indeed we have
  $$
  F(-\lambda,M\lambda)=
  \Big(aM^{\frac{2}{l+2}}\lambda^{\frac{\beta}{\beta+2}+\frac{2}{l+2}},
  aM^{\frac{-l}{l+2}}\lambda^{\frac{\beta}{\beta+2}+\frac{2}{l+2}}-
  cM^{\frac{2\alpha-l+2}{l+2}}\frac{l+2}{2(l+1)}\lambda^{\frac{2\alpha-l}{l+2}+\frac{2}{l+2}}\Big)
  $$
  Since $\frac{2\alpha-l}{l+2}<\frac{\beta}{\beta+2}$ as a consequence of $
  \alpha < \frac{\beta(l+1)+l}{\beta+2}$, for $\lambda$ small enough the field at any point of $
  \triangle_M $ points into the region $ S_{\varepsilon,M} $. And smallness of $\lambda$ is a consequence of smallness of $\varepsilon$ whenever $(z,w)\in \triangle_M. $  
  
  Finally, since $ F(-\lambda,w)$  tends to $(0,+\infty)$ as $w\rightarrow 0$ , the solution cannot escape 
  $ S_{\varepsilon,M} $ at a point lying on the horizontal axis. 
  More precisely, assuming the contrary means  that for some finite $t_0>0$ we have $w>0$ on $[t_0-\delta,t_0)$,\,\,$w(t_0)=0,$\,\,  and $z(t_0)<0. $ Then for t sufficiently close to $t_0$:
  \begin{equation*}
\begin{split}
  w'&\geq C_1 w^{-\frac{1}{l+2}}-C_2w^{\frac{2\alpha_l+2}{l+2}}\\&
  \geq C w^{-\frac{1}{l+2}}
 \end{split}
  \end{equation*}
  so that  $w(t)$ is increasing for t sufficiently close to $t_0$, and this contradicts $w(t_0)=0$. Finally, for any trajectory of (\ref{E1}) and (\ref{E2}) lying in any region $S_{\varepsilon,M}$, $\frac{w}{|z|}=|\tan \theta| $ is bounded , when  for a fast solution $ |\tan \theta| $ blows-up at infinity in $t$. Hence all solutions confined in $S_{\varepsilon,M}$ are slow solutions. \end{proof}

\section{Optimality Results}
\begin{proposition}
Assume either (\ref{CO1}) or (\ref{CO2}). Then, the results of
Corollary \ref{CO} are optimal. More precisely, any solution $u\not\equiv 0$ of (\ref{1}) satisfies
\begin{equation}\label{RT}
\limsup_{t\rightarrow +\infty}t^{\frac{l+2}{(\beta+2)(\alpha-l)}}u(t)> 0
\end{equation}
\begin{equation}\label{RTT}
\limsup_{t\rightarrow +\infty}t^{\frac{1}{\alpha-l}}u'(t)> 0.
\end{equation}
\end{proposition}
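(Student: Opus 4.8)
The plan is to read off the optimality directly from the two-sided control of the energy that has already been established. Under either (\ref{CO1}) or (\ref{CO2}) we have $\alpha \ge \frac{\beta(l+1)+l}{\beta+2}$, and since $\beta \ge l$ gives $\frac{\beta(l+1)+l}{\beta+2}-l = \frac{\beta-l}{\beta+2}\ge 0$, in fact $\alpha > l$. Thus the lower bound (\ref{22}) applies: there is $\eta>0$ with $E(t)\ge \eta\, t^{-\frac{l+2}{\alpha-l}}$ for all $t$ large. I would combine this with the explicit form $E(t)=\frac{l+1}{l+2}|u'(t)|^{l+2}+\frac{d}{\beta+2}|u(t)|^{\beta+2}$ and the oscillation furnished by Theorem \ref{Th1}, which guarantees that $u$ and $u'$ each vanish at an increasing sequence of times tending to $+\infty$. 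The idea is that the lower bound on $E$ must be carried entirely by the surviving term of $E$ at the instants where the other term vanishes.

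To prove (\ref{RT}), I would evaluate the energy bound at the turning points. By Theorem \ref{Th1} there is a sequence $t_n\to\infty$ with $u'(t_n)=0$; at such points the kinetic term drops out and $E(t_n)=\frac{d}{\beta+2}|u(t_n)|^{\beta+2}$. Feeding this into $E(t_n)\ge \eta\, t_n^{-\frac{l+2}{\alpha-l}}$ yields $|u(t_n)|\ge C\, t_n^{-\frac{l+2}{(\beta+2)(\alpha-l)}}$ for some $C>0$, which is precisely the rate appearing in (\ref{RT}) and, together with Corollary \ref{CO}, shows that this rate is sharp. Since $u$ changes sign infinitely often, I would select among these turning points the subsequence of positive maxima (those lying between an upward and a downward zero crossing of $u$), where $u>0$; along that subsequence $t_n^{\frac{l+2}{(\beta+2)(\alpha-l)}}u(t_n)\ge C$, giving (\ref{RT}).

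The proof of (\ref{RTT}) is symmetric, using the zeros of $u$ instead. By Theorem \ref{Th1} there is a sequence $s_n\to\infty$ with $u(s_n)=0$; there the potential term drops out and $E(s_n)=\frac{l+1}{l+2}|u'(s_n)|^{l+2}$, so the lower bound forces $|u'(s_n)|\ge C'\, s_n^{-\frac{1}{\alpha-l}}$. Restricting to the upward crossings, where $u$ passes through $0$ increasingly and hence $u'(s_n)>0$, I obtain $s_n^{\frac{1}{\alpha-l}}u'(s_n)\ge C'$ along that subsequence, which is (\ref{RTT}).

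The only delicate point — and the place where I would do the actual bookkeeping — is the sign selection: one must verify that there are infinitely many turning points of $u$ at which $u>0$ and infinitely many zeros of $u$ at which $u'>0$, both tending to infinity. This is not an extra hypothesis but a consequence of Steps 1 and 2 in the proof of Theorem \ref{Th1}, which interlace the zeros of $u$ and of $u'$ into increasing sequences and determine the alternation of their signs. Everything else reduces to substituting the exact energy rate $t^{-\frac{l+2}{\alpha-l}}$ into the two pieces of $E$, so I expect no analytic obstacle beyond this combinatorial sign argument.
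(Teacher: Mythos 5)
Your proposal is correct and takes essentially the same route as the paper: the paper likewise invokes Theorem \ref{Th1} to produce sequences $t_n\to\infty$ with $u(t_n)>0$, $u'(t_n)=0$ and $\tau_n\to\infty$ with $u(\tau_n)=0$, $u'(\tau_n)>0$, reads off $u(t_n)$ and $u'(\tau_n)$ from the energy formula (\ref{2}) at those points, and concludes from the lower bound (\ref{22}). The sign-selection bookkeeping you flag as the delicate step is exactly what the paper leaves implicit in the phrase ``as a consequence of Theorem \ref{Th1}''.
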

\begin{proof} As a consequence of Theorem \ref{Th1}, there is a sequence $t_n\rightarrow +\infty$ such that
$$
u(t_n)> 0,\,\,\, u'(t_n)=0.
$$
Using (\ref{2}), we obtain
$$
u(t_n)=\left\{\frac{\beta+2}{d}E(t_n)\right\}^{\frac{1}{\beta+2}},
$$
and (\ref{RT}) is a consequence of  (\ref{22}). Similarly, there is a sequence $\tau_n\rightarrow +\infty$ such that
$$
u(\tau_n)= 0,\,\,\, u'(\tau_n)>0.
$$
Using (\ref{2}), we obtain
$$
u'(\tau_n)=\left\{\frac{l+2}{l+1}E(\tau_n)\right\}^{\frac{1}{l+2}},
$$
and (\ref{RTT}) is a consequence of  (\ref{22}).\end{proof}
\begin{theorem}\label{mp}
Assume  $\alpha < \frac{\beta(l+1)+l}{\beta+2}$ and $\alpha>l$. Then for any  solution $u\neq 0$ of (\ref{1}) satisfying (\ref{A}), we have
$$
\lim_{t\rightarrow +\infty}t^{\frac{1-\alpha+l}{\alpha-l}}|u(t)|=\frac{\alpha-l}{1-\alpha+l}
\Big(\frac{l+1}{c(\alpha-l)}\Big)^{\frac{1}{\alpha-l}}.
$$
$$
\lim_{t\rightarrow +\infty}t^{\frac{1}{\alpha-l}}|u'(t)|=
\Big(\frac{l+1}{c(\alpha-l)}\Big)^{\frac{1}{\alpha-l}}.
$$
\end{theorem}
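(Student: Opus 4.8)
The plan is to reduce the full equation to the model equation $(|u'|^l u')' + c|u'|^\alpha u'=0$ by showing that the source term $d|u|^\beta u$ is asymptotically negligible, and then to read off the exact constants. By Theorem \ref{ml} we may assume, after possibly replacing $u$ by $-u$, that for $t$ large $u>0$, $u'<0$ and $u''>0$; in particular $y:=-u'=|u'|$ is positive and both $u$ and $y$ tend to $0$ monotonically. For such $t$ the point is non-singular, so $u\in C^2$ and the equation reads $(l+1)y^l y' = d\,u^{\beta+1}-c\,y^{\alpha+1}$.

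First I would establish sharp two-sided bounds. From the Remark following Theorem \ref{Non} (valid precisely because $u$ satisfies \eqref{A}) we have the upper bounds $u(t)\le C\,t^{-(l+1-\alpha)/(\alpha-l)}$ and $y(t)\le C\,t^{-1/(\alpha-l)}$. For the matching lower bound on $y$ I would use \eqref{22}, which gives $E(t)\ge \tfrac{\eta}{2}t^{-(l+2)/(\alpha-l)}$ for $t$ large, together with the observation that the potential part is of strictly smaller order: the inequality $l+1-\alpha>\tfrac{l+2}{\beta+2}$, which is equivalent to the hypothesis $\alpha<\tfrac{\beta(l+1)+l}{\beta+2}$, yields $|u|^{\beta+2}=o\bigl(t^{-(l+2)/(\alpha-l)}\bigr)$; hence $\tfrac{l+1}{l+2}\,y^{l+2}\ge \tfrac{\eta}{4}\,t^{-(l+2)/(\alpha-l)}$ and therefore $y(t)\ge c\,t^{-1/(\alpha-l)}$.

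The core of the argument is the identity obtained by differentiating $y^{-(\alpha-l)}$ and substituting the equation:
\[
\frac{d}{dt}\,y^{-(\alpha-l)} = \frac{(\alpha-l)c}{l+1} - \frac{(\alpha-l)d}{l+1}\,\frac{u^{\beta+1}}{y^{\alpha+1}}.
\]
The main obstacle, and the step where the parameter condition is genuinely used, is to show that the last term tends to $0$. Inserting the bounds above gives $\dfrac{u^{\beta+1}}{y^{\alpha+1}}\le C\,t^{\,[(\alpha+1)-(\beta+1)(l+1-\alpha)]/(\alpha-l)}$, and the exponent is negative exactly when $(\beta+1)(l+1-\alpha)>\alpha+1$, i.e. $\alpha(\beta+2)<\beta(l+1)+l$, which is the hypothesis. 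Thus $\frac{d}{dt}\,y^{-(\alpha-l)}\to \frac{(\alpha-l)c}{l+1}$.

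Finally I would apply the elementary fact that $f'(t)\to L$ implies $f(t)/t\to L$ to $f=y^{-(\alpha-l)}$, obtaining $y^{-(\alpha-l)}\sim \tfrac{(\alpha-l)c}{l+1}\,t$, that is
\[
|u'(t)|=y(t)\sim \Big(\frac{l+1}{c(\alpha-l)}\Big)^{1/(\alpha-l)}t^{-1/(\alpha-l)},
\]
which is the second claimed limit. For the first, since $u\to0$ and $u'<0$ one has $u(t)=\int_t^\infty y(s)\,ds$; because $\alpha<l+1$ forces $1/(\alpha-l)>1$ the tail integral converges, and the asymptotic equivalence passes to it, giving $u(t)\sim \big(\tfrac{l+1}{c(\alpha-l)}\big)^{1/(\alpha-l)}\int_t^\infty s^{-1/(\alpha-l)}\,ds=\tfrac{\alpha-l}{1-\alpha+l}\big(\tfrac{l+1}{c(\alpha-l)}\big)^{1/(\alpha-l)}t^{-(1-\alpha+l)/(\alpha-l)}$. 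This is exactly the first limit, which completes the proof.
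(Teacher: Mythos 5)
Your proof is correct, and its skeleton coincides with the paper's: restrict to a half-line where $u>0>u'$ (non-oscillation), show that the source term $d\,u^{\beta+1}$ is asymptotically negligible against the damping $c\,|u'|^{\alpha+1}$, integrate the resulting perturbed first-order equation for $|u'|$, and finally pass the asymptotics through $u(t)=\int_t^\infty|u'(s)|\,ds$ (convergent since $\alpha-l<1$). Where you genuinely differ is in how negligibility is proved. The paper sets $v=(-u')^{l+1}$ and sandwiches $-cv^{\frac{\alpha+1}{l+1}}\le v'\le(-c+\varepsilon(t))v^{\frac{\alpha+1}{l+1}}$, the upper inequality resting on the pointwise comparison $|u|\le|u'|^{\frac{l+2}{\beta+2}}$, which in turn is justified by the polar-coordinate machinery of Sections 4--5: for a solution satisfying (\ref{A}) the angle $\theta(t)$ converges and one must be in the case $\cos\Theta=0$ of Theorem \ref{Non} (the paper invokes this as ``$\cos\theta(t)$ tends to $0$'' without spelling out the dichotomy). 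You avoid the angle variable entirely: you combine the upper bounds of the Remark following Theorem \ref{Non} with a matching lower bound $|u'(t)|\ge \kappa\,t^{-1/(\alpha-l)}$, extracted from (\ref{22}) by observing that the potential energy decays strictly faster than $t^{-(l+2)/(\alpha-l)}$ (an exponent inequality equivalent to the standing hypothesis), so the kinetic term must carry the energy lower bound; then $u^{\beta+1}/|u'|^{\alpha+1}\to0$ follows by pure power counting, the hypothesis entering a second time as an exponent inequality. What each route buys: the paper's is shorter once the Section 5 apparatus (Lemma \ref{L1}, convergence of $\theta$) is available; yours is more elementary and self-contained, needing only Theorem \ref{ml} for the sign information, (\ref{22}), (\ref{A}) and the Remark, and it produces along the way the two-sided estimate $|u'|\asymp t^{-1/(\alpha-l)}$, which is of independent interest. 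Your remaining steps --- the exact identity for $\frac{d}{dt}\,|u'|^{-(\alpha-l)}$, the elementary fact that $f'\to L$ implies $f/t\to L$, and passing the equivalence to the tail integral --- are sound and are the exact analogues of the paper's final integration.
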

\begin{proof} We consider the case $u> 0,\,\,\,u'< 0$. Then, equation (\ref{1}) implies
\begin{equation}\label{Q}
(|u'|^lu')'+c(-u')^{\alpha}u'=-du^{\beta+1} < 0,
\end{equation}
so that if $v^{\frac{1}{l+1}}=-u'$ we find
\begin{equation}\label{V1}
v'\geq -cv^{\frac{1+\alpha}{l+1}},
\end{equation}
for t large enough, using the fact that $\cos \theta(t)$ tends to $0$, we have
$$
|u(t)|\leq |u'(t)|^{\frac{l+2}{\beta+2}},
$$
hence,
\begin{equation}\label{QQ}
|u(t)|^{\beta+1}\leq |u'(t)|^{\frac{(\beta+1)(l+2)}{\beta+2}}.
\end{equation}
Using (\ref{Q}) and (\ref{QQ}), we find
$$ v'+cv^{\frac{\alpha+1}{l+1}}\leq
|u'|^{\frac{(l+2)(\beta+1)}{\beta+2}}=v^{\frac{(l+2)(\beta+1)}{(\beta+2)(l+1)}} $$ 

Since

$\frac{(l+2)(\beta+1)}{\beta+2}-(\alpha+1)=\frac{\beta(l+1)+l}{\beta+2}-\alpha>
0$, we find
\begin{equation}\label{V2}
v'\leq (-c+\varepsilon(t))v^{\frac{\alpha+1}{l+1}}
\end{equation}
where 
 $\lim_{t\rightarrow +\infty}\varepsilon(t)=0$. It follows easily that $$\lim_{t\rightarrow +\infty}t^{\frac{1}{\alpha-l}} |u'(t)|
=  \lim_{t\rightarrow +\infty}t^{\frac{1}{\alpha-l}} v(t)^{\frac{1}{l+1}}= \Big(\frac{l+1}{c(\alpha-l)}\Big)^{\frac{1}{\alpha-l}}.
$$
Finally we have
$$\forall t \geq 1,\quad u(t)=\Int_t^\infty v^{\frac{1}{l+1}}(s)\,ds $$ and this is easily seen to imply
$$ \lim_{t\rightarrow+\infty}t^{\frac{1-\alpha+l}{\alpha-l}}|u(t)|=\frac{\alpha-l}{\alpha-1-l}
\Big(\frac{l+1}{c(\alpha-l)}\Big)^{\frac{1}{\alpha-l}}.
$$
This ends the proof of Theorem \ref{mp}. \end{proof}

\newpage 

\begin{thebibliography}{10}

\bibitem{AB} M. Abdelli and A. Benaissa, {\em Energy decay of solutions of
a degenerate Kirchhoff equation with a weak nonlinear dissipation\/,} Nonlinear Analysis
{\bf 69} (2008), 1999-2008.
\bibitem{AM}  N. D. Amroun and A. Benaissa, {\em   On exact solutions of a quasilinear wave equation with source term\/,}  Electron. J. Differential Equations, {\bf 31}
    (2006), 231-238.
 \bibitem{BM} A. Benaissa and S. Mimouni, {\em Energy decay of solutions of a wave equation of p -Laplacian type with a weakly nonlinear dissipation\/,} J. Inequal. Pure Appl. Math. {\bf 7} (2006), 1-19.
\bibitem{har1} A. Haraux, {\em Asymptotics for some
nonlinear O.D.E of the second order\/,}  Nonlinear Analysis {\bf 10} (1986), no 12, 1347-1355.
{\bf 95} (2005), 297-321.
\bibitem{har2} A. Haraux, {\em Sharp decay estimates of the solutions to a class
of nonlinear second order ODE's\/,} Analysis and Applications,
{\bf 9} (2011), 49-69.
\bibitem{har3} A. Haraux, {\em Slow and fast decay of solutions to some second order evolution equations\/,} J. Anal. Math. {\bf 95} (2005), 297-321.
    \bibitem{har4} A. Haraux and M. A. Jendoubi,
    {\em Convergence of solutions of second-order gradientlike systems with analyc nonlinearities\/,}
    J. Differential Equations
    {\bf 144}(2) (1998), 313-320.
    \bibitem{har5} A. Haraux and M. A. Jendoubi,
    {\em Decay estimates to equilibrium for some evolution equations with an analytic nonlinearity\/,}
    Asymptot. Anal.
    {\bf 26}(1) (2001), 21-36.
\end{thebibliography}
\end{document}